\newtheorem{precor}{{\bf Corollary}}
\newenvironment{cor}{\begin{precor}{\hspace{-0.5
               em}{\bf.\ }}}{\end{precor}}
\newtheorem{precon}{{\bf Conjecture}}
\newenvironment{con}{\begin{precon}{\hspace{-0.5
               em}{\bf.\ }}}{\end{precon}}
\newtheorem{prealphcon}{{\bf Conjecture}}
\newtheorem{predefin}{{\bf Definition}}
\newenvironment{defin}[1]{\begin{predefin}{\hspace{-0.5
                   em}{\bf.\ }}{\rm #1}\hfill{$\spadesuit$}}{\end{predefin}}
\newtheorem{preexm}{{\bf Example}}
\newtheorem{preappl}{{\bf Application}}
\newtheorem{prelem}{{\bf Lemma}}
\newtheorem{preproof}{{\bf Proof.\ }}
\newenvironment{proof}[1]{\begin{preproof}{\rm
               #1}\hfill{$\blacksquare$}}{\end{preproof}}
\newtheorem{pretheorem}{{\bf Theorem}}
\newenvironment{theorem}{\begin{pretheorem}{\hspace{-0.5
               em}{\bf.\ }}}{\end{pretheorem}}
\newtheorem{prealphtheorem}{{\bf Theorem}}
\newtheorem{prealphlem}{{\bf Lemma}}
\newtheorem{prepro}{{\bf Proposition}}
\newenvironment{pro}{\begin{prepro}{\hspace{-0.5
               em}{\bf.\ }}}{\end{prepro}}
\newtheorem{preprb}{{\bf Problem}}
\newtheorem{prerem}{{\bf Remark}}
\newtheorem{preapp}{{\bf Application}}
\newtheorem{prequ}{{\bf Question}}
\def\conct[#1,#2]{\mbox {${#1} \leftrightarrow {#2}$}}
\def\dconct[#1,#2]{\mbox {${#1} \rightarrow {#2}$}}
\def\deg[#1,#2]{\mbox {$d_{_{#1}}(#2)$}}
\def\mindeg[#1]{\mbox {$\delta_{_{#1}}$}}
\def\maxdeg[#1]{\mbox {$\Delta_{_{#1}}$}}
\def\outdeg[#1,#2]{\mbox {$d_{_{#1}}^{^+}(#2)$}}
\def\minoutdeg[#1]{\mbox {$\delta_{_{#1}}^{^+}$}}
\def\maxoutdeg[#1]{\mbox {$\Delta_{_{#1}}^{^+}$}}
\def\indeg[#1,#2]{\mbox {$d_{_{#1}}^{^-}(#2)$}}
\def\minindeg[#1]{\mbox {$\delta_{_{#1}}^{^-}$}}
\def\maxindeg[#1]{\mbox {$\Delta_{_{#1}}^{^-}$}}
\def\dre[#1,#2,#3]{\mbox {${\cal E}^{^{#3}}(#1,#2)$}}
\def\var[#1,#2]{\mbox {${\rm Var}_{_{#1}}(#2)$}}
\def\ls[#1]{\mbox {$\xi^{^{#1}}$}}
\def\hom[#1,#2]{\mbox {${\rm Hom}({#1},{#2})$}}
\def\onvhom[#1,#2]{\mbox {${\rm Hom^{v}}(#1,#2)$}}
\def\onehom[#1,#2]{\mbox {${\rm Hom^{e}}(#1,#2)$}}
\def\core[#1]{\mbox {$#1^{^{\bullet}}$}}
\def\cay[#1,#2]{\mbox {${\rm Cay}({#1},{#2})$}}
\def\sch[#1,#2,#3]{\mbox {${\rm Sch}({#1},{#2},{#3})$}}
\def\cays[#1,#2]{\mbox {${\rm Cay_{s}}({#1},{#2})$}}
\def\dirc[#1]{\mbox {$\stackrel{\rightarrow}{C}_{_{#1}}$}}
\def\cycl[#1]{\mbox {${\bf Z}_{_{#1}}$}}
\begin{document}
%\setcounter{page}{183}
%{\footnotesize AAA {\bf ?} (200?) ?--?}\\
%\maketitle

\begin{center}
{\Large \bf On Fall Colorings of Graphs}\\
\vspace{0.3 cm}
{\bf Saeed Shaebani}\\
{\it Department of Mathematical Sciences}\\
{\it Institute for Advanced Studies in Basic Sciences (IASBS)}\\
{\it P.O. Box {\rm 45195-1159}, Zanjan, Iran}\\
{\tt s\_shaebani@iasbs.ac.ir}\\ \ \\
\end{center}
\begin{abstract}
\noindent A fall $k$-coloring of a graph $G$ is a proper
$k$-coloring of $G$ such that each vertex of $G$ sees all $k$
colors on its closed neighborhood. We denote ${\rm Fall}(G)$ the
set of all positive integers $k$ for which $G$ has a fall
$k$-coloring. In this paper, we study fall colorings of
lexicographic product of graphs and categorical product of graphs
and answer a question of \cite{dun} about fall colorings of
categorical product of complete graphs. Then, we study fall
colorings of union of graphs. Then, we prove that fall
$k$-colorings of a graph can be reduced into proper $k$-colorings
of graphs in a specified set. Then, we characterize fall
colorings of Mycielskian of graphs. Finally, we prove that for
each bipartite graph $G$, ${\rm Fall}(G^{c})\subseteq \ \{\ \chi
(G^{c})\ \}$ and it is polynomial time to decision  whether or
not ${\rm Fall}(G^{c})=\{\ \chi (G^{c})\ \}$ .

\noindent {\bf Keywords:}\ {  fall Coloring, lexicographic
product, categorical product.}\\
{\bf Subject classification: 05C}
\end{abstract}
%%%%%%%%%%%%%%%%%%%%%%%%%%%%%%%%%%%%%%%%%%%%%%%%%%%%%%%%%%%%%%%%%%%%%%%
%%%%%%%%%%%%%%%%%%%%%%%%%%%%%%%%%%%%%%%%%%%%%%%%%%%%%%%%%%%%%%%%%%%%%%%%
\section{Introduction}
All graphs considered in this paper are finite and simple
(undirected, loopless and without multiple edges). Let $G=(V,E)$
be a graph and $k\in \mathbb{N}$ and $[k]:=\{i|\ i\in
\mathbb{N},\ 1\leq i\leq k \}$. A $k$-coloring (proper k-coloring)
of $G$ is a function $f:V\rightarrow [k]$ such that for each
$1\leq i\leq k$, $f^{-1}(i)$ is an independent set. We say that
$G$ is $k$-colorable whenever $G$ admits a $k$-coloring $f$, in
this case, we denote $f^{-1}(i)$ by $V_{i}$ and call each $1\leq
i\leq k$, a color (of $f$) and each $V_{i}$, a color class (of
$f$). The minimum integer $k$ for which $G$ has a $k$-coloring,
is called the chromatic number of G and is denoted by $\chi(G)$.

Let $G$ be a graph, $f$ be a $k$-coloring of $G$ and $v$ be a
vertex of $G$. The vertex $v$ is called colorful ( or
color-dominating or $b$-dominating) if each color $1\leq i\leq k$
appears on the closed neighborhood of $v$ (\ $f(N[v])=[k]$ ). The
$k$-coloring $f$ is said to be a fall $k$-coloring (of $G$) if
each vertex of $G$ is colorful. There are graphs $G$ for which
$G$ has no fall $k$-coloring for any positive integer $k$. For
example, $C_{5}$ ( a cycle with 5 vertices) and graphs with at
least one edge and one isolated vertex, have not any fall
$k$-colorings for any positive integer $k$. The notation ${\rm
Fall}(G)$ stands for the set of all positive integers $k$ for
which $G$ has a fall $k$-coloring. Whenever ${\rm
Fall}(G)\neq\emptyset$, we call $\min({\rm Fall}(G))$ and
$\max({\rm Fall}(G))$, fall chromatic number of $G$ and fall
achromatic number of $G$ and denote them by $\chi_{f}(G)$ and
$\psi_{f}(G)$, respectively. The terminology fall coloring was
firstly introduced in 2000 in \cite{dun} and has received
attention recently, see
\cite{MR2096633},\cite{MR2193924},\cite{dun},\cite{las}.

\section{Fall colorings of lexicographic product of graphs}

Let $G$ and $H$ be graphs. The lexicographic product of G and H
is defined the graph with vertex set $V(G)\times V(H)$ and edge
set $\{\ \{(x_{1},y_{1}),(x_{2},y_{2})\}\ |\ x_{1},x_{2}\in V(G)\
{\rm and}\ y_{1},y_{2}\in V(H)\ {\rm and}\ [\ (\{x_{1},x_{2}\}\in
E(G))\ {\rm or}\ (x_{1}=x_{2}, \{y_{1},y_{2}\}\in E(H))\ ]\ \}$.
For each $x\in V(G)$, the induced subgraph of $G[H]$ on
$\{x\}\times V(H)$ is denoted by $H_{x}$.

Note that $G[H]$ and $H[G]$ are not necessarily isomorphic. For
example, let $G:=K_{2}$ and $H$ be the complement of $G$. $G[H]$
has 4 edges and $H[G]$ has 2 edges and therefore, they are not
isomorphic. But lexicographic product of graphs is associative up
to isomorphism ( For arbitrary graphs $G_{1}$, $G_{2}$ and
$G_{3}$, $(G_{1}[G_{2}])[G_{3}]$ and $G_{1}[G_{2}[G_{3}]]$ are
isomorphic.).

\begin{theorem}{
\label{Fall} Let $G$ and $H$ be graphs and $k\in {\rm Fall}(G[H])$
and $f$ be a fall $k$-coloring of $G[H]$. Then, for each $x\in
V(G)$, $S_{x}:=f(V(H_{x}))$ forms a fall $|S_{x}|$-coloring of
$H_{x}$.}
\end{theorem}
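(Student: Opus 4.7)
The plan is to verify two things about the restricted coloring $f|_{V(H_x)}$: that it is a proper $|S_x|$-coloring of $H_x$, and that every vertex of $H_x$ is colorful with respect to the color set $S_x$. The first is immediate, since $H_x$ is, by definition, an induced subgraph of $G[H]$, so $f|_{V(H_x)}$ inherits properness; and by definition of $S_x$ it uses exactly $|S_x|$ colors. So the whole content of the theorem lies in the fall property.

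To prove the fall property, I would fix an arbitrary vertex $v=(x,y)\in V(H_x)$ and write its closed neighborhood in the product as a disjoint union
\[
N_{G[H]}[v] \;=\; N_{H_x}[v] \,\cup\, B,
\]
where $B:=\{(x',y''):\{x,x'\}\in E(G),\ y''\in V(H)\}$. The crucial structural observation is that $B$ depends only on $x$, not on $y$; this is the feature of the lexicographic product that makes the argument go through. Since $f$ is a fall $k$-coloring, $v$ is colorful, so $f(N_{H_x}[v])\cup f(B)=[k]$.

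Next I would show that $S_x\cap f(B)=\emptyset$. Indeed, any vertex of $H_x$ is adjacent in $G[H]$ to every vertex of $B$ (the definition of $G[H]$ completely joins $H_x$ to each $H_{x'}$ with $x'\sim_G x$), so no color class of $f$ can meet both $V(H_x)$ and $B$. Combined with the obvious inclusion $f(N_{H_x}[v])\subseteq S_x$, the colorfulness equation forces
\[
S_x \;\subseteq\; [k]\setminus f(B) \;\subseteq\; f(N_{H_x}[v]) \;\subseteq\; S_x,
\]
so $f(N_{H_x}[v])=S_x$, which is exactly the fall property on $H_x$.

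I do not expect any real obstacle here; the only step that requires a moment's thought is recognizing the disjointness $S_x\cap f(B)=\emptyset$, which is a direct consequence of independence of color classes together with the complete-join structure of the lexicographic product. The overall argument is short and uses only the defining properties of $G[H]$ and of a fall coloring.
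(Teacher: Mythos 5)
Your proof is correct and is essentially the paper's argument in a slightly more structural packaging: both hinge on the single observation that the complete join between $H_{x}$ and each $H_{x'}$ with $x'\sim_{G}x$ forces every color of $S_{x}$ to be absent from those layers, so the colors that $(x,y)$ sees outside $H_{x}$ are disjoint from $S_{x}$. The paper phrases this as a contradiction for one neighbor at a time, while you phrase it as the disjointness $S_{x}\cap f(B)=\emptyset$ followed by a chain of inclusions, but the content is the same.
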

\begin{proof}{ Let $x\in V(G)$ and $(x,y)$ be an arbitrary vertex of
 $H_{x}$
and its color be $\alpha$. Then, for each $\beta\in
S_{x}\setminus\{\alpha\}$, there exists a vertex $(a,b)$ of $G[H]$
adjacent with $(x,y)$ which is colored $\beta$. Obviously $a=x$,
otherwise, since $\beta\in S_{x}$, there exists a vertex
$(x,z)\in V(H_{x})$ colored $\beta$. $(x,y)$ is adjacent with
$(a,b)$ and $x\neq a$, so $\{x,a\}\in E(G)$ and therefore, $(x,z)$
and $(a,b)$ are adjacent in $G[H]$ and both of them are colored
$\beta$, which is a contradiction. Therefore, $a=x$ and $(a,b)\in
V(H_{x})$. Hence, $S_{x}$ forms a fall $|S_{x}|$-coloring of
$H_{x}$. }\end{proof}

\begin{cor}{ Let $G$ and $H$ be graphs. Then,
${\rm Fall}(G[H])\neq\emptyset\Rightarrow {\rm
Fall}(H)\neq\emptyset$, or equivalently, ${\rm
Fall}(H)=\emptyset\Rightarrow {\rm Fall}(G[H])=\emptyset$.}
\end{cor}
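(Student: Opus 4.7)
\noindent
The plan is to derive this corollary as an almost immediate consequence of Theorem~\ref{Fall}. Assume ${\rm Fall}(G[H])\neq\emptyset$; I would pick some $k\in {\rm Fall}(G[H])$ and a fall $k$-coloring $f$ of $G[H]$. Choose any vertex $x\in V(G)$ (note $V(G)\neq\emptyset$, since otherwise $G[H]$ would be empty and the hypothesis would be vacuous modulo a trivial edge case about the empty graph, which I would flag briefly).

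\noindent
Next I would apply Theorem~\ref{Fall} to this $x$: the set $S_x=f(V(H_x))$ together with $f$ restricted to $H_x$ constitutes a fall $|S_x|$-coloring of $H_x$. The crucial observation is that $H_x$ is, by the very definition of the lexicographic product, isomorphic to $H$ (the map $(x,y)\mapsto y$ is a graph isomorphism from $H_x$ onto $H$). Transporting the fall coloring of $H_x$ across this isomorphism yields a fall $|S_x|$-coloring of $H$ itself, so $|S_x|\in {\rm Fall}(H)$ and hence ${\rm Fall}(H)\neq\emptyset$. The contrapositive is logically equivalent and requires no separate argument.

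\noindent
There is essentially no obstacle here: the entire content has been packaged into Theorem~\ref{Fall}, and the only thing beyond invoking it is the trivial identification of $H_x$ with $H$. The only point worth being careful about is the degenerate case $V(G)=\emptyset$, in which $G[H]$ is the empty graph; depending on conventions either ${\rm Fall}(G[H])=\emptyset$ (and there is nothing to prove) or the empty function counts as a fall $0$-coloring (and the statement is vacuously valid for $H$ as well, or one restricts to nonempty $G$). I would dispose of this in a single sentence and move on.
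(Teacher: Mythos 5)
Your proposal is correct and is exactly the argument the paper intends: the corollary is stated without proof precisely because it follows from Theorem~\ref{Fall} by picking any $x\in V(G)$ and identifying $H_x$ with $H$ via $(x,y)\mapsto y$. Your extra remark on the degenerate case $V(G)=\emptyset$ is harmless and does not change the substance.
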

\begin{cor}{ Let $G$ and $H$ be graphs such that
${\rm Fall}(G[H])\neq\emptyset$. Then, ${\rm
Fall}(H)\neq\emptyset$ and for each fall $k$-coloring $f$ of
$G[H]$ and each $x\in V(G)$, $\chi_{f}(H)\leq
|f(V(H_{x}))|\leq\psi_{f}(H)$.}
\end{cor}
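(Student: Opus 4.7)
The plan is to derive Corollary 2 as a direct consequence of Theorem \ref{Fall}. Since $\mathrm{Fall}(G[H]) \neq \emptyset$, fix any fall $k$-coloring $f$ of $G[H]$ and any vertex $x \in V(G)$. By Theorem \ref{Fall}, the restriction of $f$ to $V(H_x)$ induces a fall $|S_x|$-coloring of the subgraph $H_x$, where $S_x = f(V(H_x))$.

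Next, I would observe that $H_x$ is isomorphic to $H$: by the definition of the lexicographic product $G[H]$, the induced subgraph on $\{x\} \times V(H)$ has edges exactly when the second coordinates are adjacent in $H$, via the obvious bijection $(x,y) \mapsto y$. Transporting the fall $|S_x|$-coloring of $H_x$ across this isomorphism yields a fall $|S_x|$-coloring of $H$, which immediately gives $|S_x| \in \mathrm{Fall}(H)$. In particular $\mathrm{Fall}(H) \neq \emptyset$.

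The two inequalities then follow from the very definitions of $\chi_f(H) = \min(\mathrm{Fall}(H))$ and $\psi_f(H) = \max(\mathrm{Fall}(H))$: since $|S_x| \in \mathrm{Fall}(H)$, it must lie between these extremes, giving $\chi_f(H) \leq |f(V(H_x))| \leq \psi_f(H)$.

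There is no real obstacle here; the corollary is essentially a repackaging of Theorem \ref{Fall} combined with the trivial observation that $H_x \cong H$. The only mildly notational point is to be explicit that $\chi_f(H)$ and $\psi_f(H)$ are well-defined precisely because $\mathrm{Fall}(H)$ is nonempty, so the first conclusion of the corollary is what licenses the statement of the second.
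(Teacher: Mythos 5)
Your proof is correct and is exactly the intended derivation: the paper states this corollary without proof as an immediate consequence of Theorem \ref{Fall}, and your argument (apply the theorem, use $H_x\cong H$ to get $|S_x|\in{\rm Fall}(H)$, then invoke the definitions of $\chi_f$ and $\psi_f$ as $\min$ and $\max$ of ${\rm Fall}(H)$) is precisely that consequence spelled out.
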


There are pairs of graphs $(G,H)$ for which ${\rm
Fall}(G)=\emptyset$ but ${\rm Fall}(G[H])\neq\emptyset$. For
example, ${\rm Fall}(C_{5})=\emptyset$ but $C_{5}[K_{2}]$ has a
fall 5-coloring. First let's label the vertices of $C_{5}[K_{2}]$
lexicographically: $1:=(1,1),\ 2:=(1,2),\ 3:=(2,1), \ldots,\
10:=(5,2)$. Here is a fall 5-coloring $f$ of $C_{5}[K_{2}]$:
$f(1)=1,\ f(2)=2,\ f(3)=3,\ f(4)=4,\ f(5)=1,\ f(6)=5,\ f(7)=2,\
f(8)=4,\ f(9)=5,\ f(10)=3$. Also, there are pairs of graphs
$(G,H)$ for which ${\rm Fall}(G)=\emptyset$ and ${\rm
Fall}(H)\neq\emptyset$ and ${\rm Fall}(G[H])=\emptyset$. For
example, ${\rm Fall}(C_{5})=\emptyset$ and ${\rm
Fall}(K_{1})\neq\emptyset$ and ${\rm Fall}(C_{5}[K_{1}])={\rm
Fall}(C_{5})=\emptyset$. The next theorem shows that if ${\rm
Fall}(G)\neq\emptyset$ and ${\rm Fall}(H)\neq\emptyset$, then,
${\rm Fall}(G[H])\neq\emptyset$.

\begin{theorem}{
\label{subset} Let $G$ and $H$ be graphs for which ${\rm
Fall}(G)\neq\emptyset$ and ${\rm Fall}(H)\neq\emptyset$. Then,
$\{\ \sum_{i=1}^{s}k_{i}\ |\ s\in {\rm Fall}(G),\  \forall 1\leq
i\leq s:\  k_{i}\in {\rm Fall}(H)\ \}\subseteq {\rm Fall}(G[H])$.}
\end{theorem}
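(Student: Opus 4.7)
The plan is to build a fall $N$-coloring of $G[H]$ (where $N=\sum_{i=1}^{s}k_{i}$) by ``stacking'' the given fall coloring of $H$ inside each copy $H_{x}$, with the choice of copy dictated by a fixed fall $s$-coloring of $G$. Concretely, fix a fall $s$-coloring $g$ of $G$ with color classes $V_{1},\ldots,V_{s}$, and for each $i\in[s]$ fix a fall $k_{i}$-coloring $h_{i}$ of $H$. Partition $[N]$ into consecutive disjoint blocks $B_{1},\ldots,B_{s}$ with $|B_{i}|=k_{i}$, and fix bijections $\pi_{i}:[k_{i}]\to B_{i}$. Then define
$$f:V(G)\times V(H)\lra [N],\qquad f(x,y)\isdef \pi_{g(x)}\bigl(h_{g(x)}(y)\bigr).$$

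\noindent The first step is to verify that $f$ is a proper $N$-coloring of $G[H]$. There are two types of edges to check. If $(x_{1},y_{1})$ and $(x_{2},y_{2})$ are adjacent with $\{x_{1},x_{2}\}\in E(G)$, then $g(x_{1})\neq g(x_{2})$ because $g$ is proper, so $f(x_{1},y_{1})$ and $f(x_{2},y_{2})$ lie in distinct (hence disjoint) blocks $B_{g(x_{1})}$ and $B_{g(x_{2})}$. If instead $x_{1}=x_{2}=:x$ and $\{y_{1},y_{2}\}\in E(H)$, then both colors lie in $B_{g(x)}$, and since $h_{g(x)}$ is a proper coloring of $H$ we have $h_{g(x)}(y_{1})\neq h_{g(x)}(y_{2})$, so after applying the bijection $\pi_{g(x)}$ the colors remain distinct.

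\noindent Next I verify the fall condition: every vertex $(x,y)$ must see every color of $[N]$ on its closed neighborhood. Set $i:=g(x)$ and pick an arbitrary target color $c\in[N]$, writing $c=\pi_{j}(\beta)$ with $j\in[s]$ and $\beta\in[k_{j}]$. If $j=i$, then since $h_{i}$ is a fall coloring of $H$ there exists $y'\in N_{H}[y]$ with $h_{i}(y')=\beta$; the vertex $(x,y')$ lies in $N_{G[H]}[(x,y)]$ and receives color $\pi_{i}(\beta)=c$. If $j\neq i$, then because $g$ is a fall coloring of $G$ some $x'\in N_{G}[x]$ satisfies $g(x')=j$, and necessarily $\{x,x'\}\in E(G)$ since $g(x)\neq g(x')$; the coloring $h_{j}$ being surjective onto $[k_{j}]$ gives a $y''\in V(H)$ with $h_{j}(y'')=\beta$, and then $(x',y'')$ is adjacent to $(x,y)$ in $G[H]$ with $f(x',y'')=\pi_{j}(\beta)=c$.

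\noindent The argument is essentially bookkeeping, so I do not expect any genuine obstacle; the only delicate point is guaranteeing that the same color $\beta$ of $H$ under $h_{j}$ is realized at some vertex $y''$, which follows immediately from the fact that every color of a fall $k_{j}$-coloring of $H$ is actually used (for otherwise a vertex in $V(H)$ would fail to see it on its closed neighborhood). Once this is in place, the construction produces a fall $N$-coloring of $G[H]$, establishing $N\in{\rm Fall}(G[H])$ for every admissible choice of $s$ and $k_{1},\ldots,k_{s}$.
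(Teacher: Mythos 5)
Your construction is the same as the paper's: the paper colors $(x,y)$ with the pair $(g(x),h_{g(x)}(y))$, which is exactly your $\pi_{g(x)}(h_{g(x)}(y))$ with the blocks $B_i$ realized as $\{i\}\times[k_i]$, and your two-case verification of the fall condition (target color in the same block versus a different block) matches the paper's Case I and Case II. The proof is correct, and your explicit remark that every color of a fall coloring is actually used is the same surjectivity fact the paper relies on to count exactly $\sum_{i=1}^{s}k_{i}$ colors.
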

\begin{proof}{ Let $s\in {\rm Fall}(G)$ and $g:V(G)\rightarrow[s]$ be a
fall $s$-coloring of $G$ and for each $1\leq i\leq s$, $k_{i}\in
{\rm Fall}(H)$ and $h_{i}$ be a fall $k_{i}$-coloring of $H$.
Let's color each vertex $(x,y)$ of $G[H]$ by color
$(g(x),h_{g(x)}(y))$. Indeed, let's consider the function
$f:V(G[H])\rightarrow S:=\{\ (g(x),h_{g(x)}(y))\ |\ (x,y)\in
V(G)\times V(H)\ \}$ which assigns to each $(x,y)$ of $G[H]$,
$(g(x),h_{g(x)}(y))$. For each adjacent vertices $(x,y)$ and
$(a,b)$ in $G[H]$, $\{x,a\}\in E(G)$ or $(x=a$ and $\{y,b\}\in
E(H))$. So, $g(x)\neq g(a)$ or $(g(x)=g(a)$ and $h_{g(x)}(y)\neq
h_{g(a)}(b))$. Therefore, $(g(x),h_{g(x)}(y))\neq
(g(a),h_{g(a)}(b))$. This shows that $f$ is a
$(\sum_{i=1}^{s}k_{i})$-coloring of $G[H]$ such that uses exactly
$\sum_{i=1}^{s}k_{i}$ colors. Now let's show that $f$ is a fall
$(\sum_{i=1}^{s}k_{i})$-coloring of $G[H]$. For each $(x,y)\in
V(G[H])$ and each $(\alpha,\beta)\in S\setminus\{\
(g(x),h_{g(x)}(y))\ \}$, there is a vertex $(u,v)$ of $G[H]$
colored $(\alpha,\beta)$, or equivalently,
$(g(u),h_{g(u)}(v))=(\alpha,\beta)$. Now, there are two cases:

Case I) The case that $g(x)=g(u)$. In this case,
$h_{g(x)}=h_{g(u)}$ and $h_{g(x)}(y)\neq h_{g(u)}(v)$. Since
$h_{g(x)}$ is a fall $k_{g(x)}$-coloring of $H$, there exists a
vertex $z\in V(H)$ such that $\{z,y\}\in E(H)$ and
$h_{g(x)}(z)=h_{g(u)}(v)$. The vertex $(x,z)$ of $G[H]$ is
adjacent with $(x,y)$ and its color is
$f((x,z))=(g(x),h_{g(x)}(z))=(g(u),h_{g(u)}(v))=(\alpha,\beta)$.

Case II) The case that $g(x)\neq g(u)$. Since $g$ is a fall
$s$-coloring of $G$, there exists a vertex $z\in V(G)$ such that
$\{x,z\}\in E(G)$ and $g(z)=g(u)$. So, $h_{g(u)}(v)=h_{g(z)}(v)$.
The vertex $(z,v)$ is adjacent with $(x,y)$ in $G[H]$ and
$f((z,v))=(g(z),h_{g(z)}(v))=(g(u),h_{g(u)}(v))=(\alpha,\beta)$.

Hence, $f$ is a fall $(\sum_{i=1}^{s}k_{i})$-coloring of $G[H]$.
Therefore, $\{\ \sum_{i=1}^{s}k_{i}\ |\ s\in {\rm Fall}(G)$,\
$\forall 1\leq i\leq s$:\  $k_{i}\in {\rm Fall}(H)\ \}\subseteq
{\rm Fall}(G[H])$. }\end{proof}

\begin{cor}{ Let $G$ and $H$ be graphs for which
${\rm Fall}(G)\neq\emptyset$ and ${\rm Fall}(H)\neq\emptyset$.
Then, $\chi_{f}(G[H])\leq \chi_{f}(G)\chi_{f}(H)\leq
\psi_{f}(G)\psi_{f}(H)\leq \psi_{f}(G[H])$.}
\end{cor}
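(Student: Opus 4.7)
The plan is to observe that the corollary is a direct consequence of Theorem \ref{subset}, applied twice, with the trivial middle inequality handled separately.

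First I would note that the middle inequality $\chi_{f}(G)\chi_{f}(H)\leq \psi_{f}(G)\psi_{f}(H)$ is immediate from the definitions: $\chi_{f}(G)=\min({\rm Fall}(G))\leq \max({\rm Fall}(G))=\psi_{f}(G)$, and likewise for $H$, so multiplying the two inequalities (between positive integers, which we have since ${\rm Fall}(G)$ and ${\rm Fall}(H)$ are nonempty) yields the claim.

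For the left inequality, I would instantiate Theorem \ref{subset} with $s:=\chi_{f}(G)\in {\rm Fall}(G)$ and $k_{i}:=\chi_{f}(H)\in {\rm Fall}(H)$ for all $1\leq i\leq s$. Then $\sum_{i=1}^{s}k_{i}=\chi_{f}(G)\chi_{f}(H)$ belongs to $\{\ \sum_{i=1}^{s}k_{i}\ |\ s\in {\rm Fall}(G),\ \forall 1\leq i\leq s:\ k_{i}\in {\rm Fall}(H)\ \}\subseteq {\rm Fall}(G[H])$, whence $\chi_{f}(G[H])=\min({\rm Fall}(G[H]))\leq \chi_{f}(G)\chi_{f}(H)$. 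For the right inequality, I would symmetrically take $s:=\psi_{f}(G)$ and $k_{i}:=\psi_{f}(H)$ for all $1\leq i\leq s$; then $\psi_{f}(G)\psi_{f}(H)\in {\rm Fall}(G[H])$, and therefore $\psi_{f}(G)\psi_{f}(H)\leq \max({\rm Fall}(G[H]))=\psi_{f}(G[H])$.

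Since each step is just one application of Theorem \ref{subset} with constant choices of $k_{i}$, there is really no obstacle to overcome here; the corollary is essentially a bookkeeping observation extracting bounds on the extremes of ${\rm Fall}(G[H])$ from the explicit family of values that Theorem \ref{subset} guarantees to be contained in ${\rm Fall}(G[H])$.
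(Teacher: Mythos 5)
Your proof is correct and is exactly the argument the paper intends: the corollary is stated without proof as an immediate consequence of Theorem \ref{subset}, obtained by taking all $k_{i}$ equal to $\chi_{f}(H)$ with $s=\chi_{f}(G)$ for the left inequality and all $k_{i}$ equal to $\psi_{f}(H)$ with $s=\psi_{f}(G)$ for the right one, the middle inequality being trivial. Nothing further is needed.
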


$\chi_{f}(G[H])$ and $\chi_{f}(G)\chi_{f}(H)$ are not necessarily
equal. For example, $\chi_{f}(C_{9})=3$ and $\chi_{f}(K_{2})=2$.
Therefore, $\chi_{f}(C_{9})\chi_{f}(K_{2})=6$, but
$\chi_{f}(C_{9}[K_{2}])\leq5$, first let's label the vertices of
$C_{9}[K_{2}]$ lexicographically: 1:=(1,1), 2:=(1,2), 3:=(2,1),
...,\ 18:=(9,2). Here is a fall 5-coloring $f$ of $C_{9}[K_{2}]$:
$f(1)=1,\ f(2)=4,\ f(3)=2,\ f(4)=3,\ f(5)=5,\ f(6)=1,\ f(7)=4,\
f(8)=2,\ f(9)=3,\ f(10)=1,\ f(11)=5,\ f(12)=2,\ f(13)=4,\ f(14)=3,
f(15)=1,\ f(16)=2,\ f(17)=5,\ f(18)=3$. Also,
$\psi_{f}(G)\psi_{f}(H)$ and $\psi_{f}(G[H])$ are not necessarily
equal. For example, $\psi_{f}(C_{8})=2$ and $\psi_{f}(K_{2})=2$
and therefore, $\psi_{f}(C_{8})\psi_{f}(K_{2})=4$. But
$\psi_{f}(C_{8}[K_{2}])\geq5$. First let's label the vertices of
$C_{8}[K_{2}]$ lexicographically: $1:=(1,1),\ 2:=(1,2),\
3:=(2,1), \ldots,\ 16:=(8,2)$. Here is a fall 5-coloring $f$ of
$C_{8}[K_{2}]$: $f(1)=1,\ f(2)=2,\ f(3)=3,\ f(4)=4,\ f(5)=5,\
f(6)=1,\ f(7)=2,\ f(8)=3,\ f(9)=4,\ f(10)=1,\ f(11)=5,\ f(12)=2,\
f(13)=3,\ f(14)=1,\ f(15)=5,\ f(16)=4$.

Theorem \ref{subset} says that if $G$ and $H$ are graphs for
which ${\rm Fall}(G)\neq\emptyset$ and ${\rm
Fall}(H)\neq\emptyset$, Then, $\{\ \sum_{i=1}^{s}k_{i}\ |\ s\in
{\rm Fall}(G),\  \forall 1\leq i\leq s:\  k_{i}\in {\rm Fall}(H)\
\}\subseteq {\rm Fall}(G[H])$. Since $5\in {\rm
Fall}(C_{9}[K_{2}])$ and $5\notin \{\ \sum_{i=1}^{s}k_{i}\ |\ \
s\in {\rm Fall}(C_{9}),\ \forall\ 1\leq i\leq s$:\  $k_{i}\in {\rm
Fall}(K_{2})\ \},\ {\rm Fall}(G[H])$ and $\{\ \sum_{i=1}^{s}k_{i}\
|\ s\in {\rm Fall}(G),\ \forall\ 1\leq i\leq s:\  k_{i}\in {\rm
Fall}(H)\}$ are not necessarily equal in this theorem.

\begin{theorem}{
There are pairs of graphs $(G,H)$ for which ${\rm
Fall}(G)\neq\emptyset$ and ${\rm Fall}(H)\neq\emptyset$ and the
following strictly inequality holds.

$\chi_{f}(G[H])<\chi_{f}(G)\chi_{f}(H)<\psi_{f}(G)\psi_{f}(H)<\psi_{f}(G[H])$.}
\end{theorem}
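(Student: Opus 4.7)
The statement is an existence claim, so the proof reduces to exhibiting an explicit pair $(G,H)$ and verifying four numerical facts: $\chi_f(G[H])<\chi_f(G)\chi_f(H)$, $\chi_f(G)<\psi_f(G)$ or $\chi_f(H)<\psi_f(H)$ (forcing the middle strict inequality), and $\psi_f(G[H])>\psi_f(G)\psi_f(H)$. The two examples already given in this section—$(C_9,K_2)$ realising $\chi_f(G[H])<\chi_f(G)\chi_f(H)$, and $(C_8,K_2)$ realising $\psi_f(G[H])>\psi_f(G)\psi_f(H)$—are the natural templates, but neither works as it stands because in each of them $\chi_f=\psi_f$ for both factors, so the middle strict inequality fails.

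My plan is therefore to replace $K_2$ by a slightly richer second factor $H$ with $\chi_f(H)<\psi_f(H)$, the smallest natural candidate being a cycle $C_{3k}$ with $k\ge 2$, for which $\chi_f(C_{3k})=2$ and $\psi_f(C_{3k})=3$, and to take $G$ to be an odd cycle of suitable length (or a product of one with $K_2$) so that fall colorings of $G[H]$ of both the required sizes can be written down explicitly. Once such a pair is fixed, the middle strict inequality is automatic: $\chi_f(G)\chi_f(H)\le 2\psi_f(G)<3\psi_f(G)=\psi_f(G)\psi_f(H)$, and the two outer inequalities reduce to producing two explicit fall colorings of $G[H]$.

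Concretely, I would label the vertices of $G[H]$ lexicographically, as in the two worked examples already in this section, and then display two colour tables: one using strictly fewer than $\chi_f(G)\chi_f(H)$ colours, obtained by merging colour classes across two adjacent fibres $H_x,H_{x'}$ in the manner of the $C_9[K_2]$ table $f(1)=1,\,f(2)=4,\,f(3)=2,\ldots$; and one using strictly more than $\psi_f(G)\psi_f(H)$ colours, obtained by refining a single fibre to introduce an extra colour, in the manner of the $C_8[K_2]$ table. The verification of each colouring is a direct check that every vertex's closed neighbourhood covers all colours in use; thanks to the cyclic symmetry of the factors, this reduces to checking a constant number of orbit representatives.

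The main obstacle is producing the colour-rich fall colouring witnessing $\psi_f(G[H])>\psi_f(G)\psi_f(H)$, because it must simultaneously (a) stay within the absolute degree bound $\Delta(G[H])+1=2|V(H)|+\Delta(H)+1$, which forces $|V(H)|>\Delta(H)+1$, hence excludes the simplest choice $H=K_2$; (b) remain a proper colouring, which interacts nontrivially with the cliques $H_x\cup H_{x'}$ present for each edge $xx'\in E(G)$; and (c) still place every colour in every closed neighbourhood. The other three verifications—the easy bounds on $\chi_f(G),\psi_f(G),\chi_f(H),\psi_f(H)$, and the short fall colouring—are routine adaptations of the constructions already in this section.
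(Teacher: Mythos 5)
Your proposal does not yet constitute a proof: it is a programme whose decisive steps are left open. The statement is a pure existence claim, so everything hinges on actually exhibiting a pair $(G,H)$ together with (i) verified values of $\chi_f(G),\psi_f(G),\chi_f(H),\psi_f(H)$ and (ii) two explicit fall colorings of $G[H]$, one with fewer than $\chi_f(G)\chi_f(H)$ colors and one with more than $\psi_f(G)\psi_f(H)$ colors. You never fix a concrete pair (you oscillate between ``an odd cycle of suitable length'' and ``a product of one with $K_2$''), and you explicitly concede that the color-rich coloring witnessing $\psi_f(G[H])>\psi_f(G)\psi_f(H)$ --- which you correctly identify as the main obstacle --- has not been constructed. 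Note also that an ``odd cycle of suitable length'' must be $C_{3k}$ with $k$ odd for ${\rm Fall}$ to be nonempty, and for such $G$ one has $\chi_f(G)=\psi_f(G)=3$, so with $H=C_6$ you would still owe a fall coloring of, say, $C_9[C_6]$ using at most $5$ colors and another using at least $10$; neither is routine, and neither is supplied. Finally, your claim that the degree bound ``excludes the simplest choice $H=K_2$'' is not correct as stated: the relevant bound is $\psi_f(G[H])\le\delta(G[H])+1$, and for $H=K_2$ this leaves room, as the paper's own $C_8[K_2]$ example (a fall $5$-coloring against $\psi_f(C_8)\psi_f(K_2)=4$) shows.

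The paper closes exactly the gap you left, and does so by modifying $G$ rather than $H$: it takes $G:=C_6\bigvee C_8\bigvee C_9$ and $H:=K_2$. Since the lexicographic product distributes over joins, $(C_6\bigvee C_8\bigvee C_9)[K_2]\cong (C_6[K_2])\bigvee(C_8[K_2])\bigvee(C_9[K_2])$, and since $\chi_f$ and $\psi_f$ are additive over joins, all four quantities reduce to the small examples already computed in this section: $\chi_f(G[H])\le 4+4+5=13<14=7\cdot 2=\chi_f(G)\chi_f(H)<16=8\cdot 2=\psi_f(G)\psi_f(H)<17\le 6+5+6\le\psi_f(G[H])$. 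The middle strict inequality comes from $\chi_f(G)=7<8=\psi_f(G)$, i.e., from the first factor --- precisely the alternative you dismissed when you concluded that the second factor had to be enriched. If you want to salvage your route via $H=C_{3k}$, you must still produce and verify the two explicit colorings; the join trick is what lets the paper avoid constructing anything new.
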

\begin{proof}{ Set $G:=C_{6}\bigvee C_{8}\bigvee C_{9}$ ( the join of $C_{6}$ and $C_{8}$ and $C_{9}$) and
$H:=K_{2}$. Since $(C_{6}\bigvee C_{8}\bigvee C_{9})[K_{2}]$ and
$(\ C_{6}[K_{2}]\ )\bigvee (\ C_{8}[K_{2}]\ )\bigvee (\
C_{9}[K_{2}]\ )$ are isomorphic, $\chi_{f}((C_{6}\bigvee
C_{8}\bigvee
C_{9})[K_{2}])=\chi_{f}(C_{6}[K_{2}])+\chi_{f}(C_{8}[K_{2}])+\chi_{f}(C_{9}[K_{2}])\leq4+4+5=13$
and $\psi_{f}((C_{6}\bigvee C_{8}\bigvee
C_{9})[K_{2}])=\psi_{f}(C_{6}[K_{2}])+\psi_{f}(C_{8}[K_{2}])+\psi_{f}(C_{9}[K_{2}])\geq6+5+6=17$
. Also, $\chi_{f}(C_{6}\bigvee C_{8}\bigvee C_{9})=7$ and
$\psi_{f}(C_{6}\bigvee C_{8}\bigvee C_{9})=8$ and
$\chi_{f}(K_{2})=\psi_{f}(K_{2})=2$, as desired. }\end{proof}

\begin{theorem}{
For each $\varepsilon>0$, There exists a pair of graphs $(S,T)$
for which
$\min\{\psi_{f}(S[T])-\psi_{f}(S)\psi_{f}(T),\psi_{f}(S)\psi_{f}(T)-\chi_{f}(S)\chi_{f}(T),\chi_{f}(S)\chi_{f}(T)-\chi_{f}(S[T])\}\geq\varepsilon$.
}\end{theorem}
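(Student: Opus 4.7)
My strategy is to amplify the construction of the previous theorem by taking joins of many disjoint copies. Set $G := C_{6}\bigvee C_{8}\bigvee C_{9}$ and $H := K_{2}$, and for a positive integer $m$ let $G^{(m)}$ denote the join of $m$ vertex-disjoint copies of $G$. I would take $S := G^{(m)}$ and $T := H$, with $m$ chosen at the end of the argument.

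First I would establish two elementary auxiliary facts. (i) For graphs $A, B$ with ${\rm Fall}(A),{\rm Fall}(B)\neq\emptyset$,
$${\rm Fall}(A\bigvee B)=\{\ a+b\ |\ a\in{\rm Fall}(A),\ b\in{\rm Fall}(B)\ \},$$
so in particular both $\chi_{f}$ and $\psi_{f}$ are additive over joins. This holds because in any proper coloring of $A\bigvee B$ every color class must lie entirely inside $V(A)$ or entirely inside $V(B)$ (the join adds all cross-pairs as edges); moreover, each vertex of $A$ automatically sees on its closed neighborhood every color used on $B$, so the fall condition at that vertex reduces to a fall condition internal to $A$ (and symmetrically for vertices of $B$). (ii) Lexicographic product distributes over join: $(A\bigvee B)[H]\cong A[H]\bigvee B[H]$, immediate from the edge-set definitions.

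Iterating (i) and (ii) yields $\chi_{f}(G^{(m)})=m\chi_{f}(G)$, $\psi_{f}(G^{(m)})=m\psi_{f}(G)$, $\chi_{f}(G^{(m)}[H])=m\chi_{f}(G[H])$, and $\psi_{f}(G^{(m)}[H])=m\psi_{f}(G[H])$. Plugging in the numerical estimates already obtained in the proof of the previous theorem, namely $\chi_{f}(G[H])\leq 13 < 14 = \chi_{f}(G)\chi_{f}(H) < 16 = \psi_{f}(G)\psi_{f}(H) < 17 \leq \psi_{f}(G[H])$, each of the three gaps $\chi_{f}(S)\chi_{f}(T)-\chi_{f}(S[T])$, $\psi_{f}(S)\psi_{f}(T)-\chi_{f}(S)\chi_{f}(T)$, and $\psi_{f}(S[T])-\psi_{f}(S)\psi_{f}(T)$ becomes at least $m$. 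Choosing $m:=\lceil\varepsilon\rceil$ then finishes the proof.

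The only genuine ingredient is claim (i); everything else is linear bookkeeping on top of the previous theorem. I do not anticipate a real obstacle, since the additivity of the fall chromatic and fall achromatic numbers under joins is a direct consequence of cross-edges forcing every color class to stay on one side of the join, and the distributivity in (ii) is an immediate check on vertex and edge sets.
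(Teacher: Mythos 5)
Your proposal is correct and is essentially the paper's own argument: since $K_{m}[G]$ is precisely the join of $m$ vertex-disjoint copies of $G$, your choice $S=G^{(m)}$ with $m=\lceil\varepsilon\rceil$ coincides with the paper's choice $S=K_{\varepsilon}[G]$ (the paper assumes $\varepsilon\in\mathbb{N}$ without loss of generality), and both arguments scale the gaps coming from the chain $\chi_{f}(G[K_2])\leq 13<14<16<17\leq\psi_{f}(G[K_2])$ of the previous theorem by the same factor. The only cosmetic difference is that you justify the scaling via additivity of ${\rm Fall}$ over joins, whereas the paper invokes the identities $\chi_{f}(K_{\varepsilon}[X])=\varepsilon\chi_{f}(X)$ and $\psi_{f}(K_{\varepsilon}[X])=\varepsilon\psi_{f}(X)$; these amount to the same computation.
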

\begin{proof} {With no loss of generality, we can assume that
 $\varepsilon$
is a natural number. Set $G:=C_{6}\bigvee C_{8}\bigvee C_{9}$ and
$S:=K_{\varepsilon}[G]$ and $T:=K_{2}$. Since $S[T]$ and
$K_{\varepsilon}[G[T]]$ are isomorphic and
$\chi_{f}(K_{\varepsilon}[G[T]])=\varepsilon\chi_{f}(G[T])$ and
$\psi_{f}(K_{\varepsilon}[G[T]])=\varepsilon\psi_{f}(G[T])$, the
theorem implies.}
\end{proof}

One can easily observe that if $G$ and $H$ are graphs such that
${\rm Fall}(G[H])\neq\emptyset$, then,
$\chi_{f}(G[H])\geq\omega(G)\chi_{f}(H)$. The next clear
proposition introduces a sufficient condition for equality.

\begin{pro}{
Let $G$ and $H$ be graphs such that ${\rm Fall}(G)\neq\emptyset$
and ${\rm Fall}(H)\neq\emptyset$ and $\chi_{f}(G)=\omega(G)$.
Then,
 $\chi_{f}(G[H])=\chi_{f}(G)\chi_{f}(H)=\omega(G)\chi_{f}(H)$.}
\end{pro}
\begin{cor}{
If $G$ is a tree or a complete graph or $C_{2k}$(for some
$k\in\mathbb{N}\setminus\{1\}$) and $H$ is a graph such that ${\rm
Fall}(H)\neq\emptyset$, then,
$\chi_{f}(G[H])=\chi_{f}(G)\chi_{f}(H)=\omega(G)\chi_{f}(H)$.}
\end{cor}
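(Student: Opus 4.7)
The plan is to recognize that this corollary is an immediate specialization of the preceding Proposition: for each of the three families of graphs listed for $G$, I will verify the two hypotheses ${\rm Fall}(G)\neq\emptyset$ and $\chi_{f}(G)=\omega(G)$, after which the equality $\chi_{f}(G[H])=\chi_{f}(G)\chi_{f}(H)=\omega(G)\chi_{f}(H)$ follows directly from the Proposition.

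First, for $G=K_{n}$, I would argue that the coloring assigning every vertex a distinct color is a proper $n$-coloring; since every pair of vertices is adjacent, every closed neighborhood contains all $n$ colors, so this coloring is fall. Thus $n\in{\rm Fall}(K_{n})$ and $\chi_{f}(K_{n})=n=\omega(K_{n})$.

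Second, for $G$ a tree and for $G=C_{2k}$ with $k\geq 2$, the graph $G$ is bipartite. The trivial case $G=K_{1}$ is handled by the $1$-coloring with $\chi_{f}(G)=\omega(G)=1$. Otherwise $G$ contains an edge, so $\omega(G)=2$, and I take the canonical $2$-coloring $f$ induced by the bipartition. Since every vertex of $G$ has at least one neighbor (a tree with $|V(G)|\geq 2$ is connected, and $C_{2k}$ is $2$-regular), and every such neighbor bears the opposite color, the closed neighborhood of each vertex sees both colors. Hence $f$ is a fall $2$-coloring, $2\in{\rm Fall}(G)$, and $\chi_{f}(G)=2=\omega(G)$.

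With the hypotheses of the preceding Proposition verified in each of the three cases, the corollary is immediate. There is no real obstacle in this argument; the only point worth noticing is that bipartiteness combined with the absence of isolated vertices automatically upgrades the standard $2$-coloring of a tree or an even cycle to a fall coloring, which is what drives the equality $\chi_{f}(G)=\omega(G)$ that the Proposition needs.
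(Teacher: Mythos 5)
Your proposal is correct and matches the paper's (implicit) argument: the paper states this corollary without proof as an immediate consequence of the preceding Proposition, and your verification that complete graphs, nontrivial trees, and even cycles $C_{2k}$ ($k\geq 2$) all satisfy ${\rm Fall}(G)\neq\emptyset$ and $\chi_{f}(G)=\omega(G)$ (via the identity coloring of $K_{n}$ and the bipartition $2$-coloring in the other two cases) is exactly the intended routine check.
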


Corollary \ref{Fall} says that in every fall $k$-coloring of
$G[H]$ and each $x\in V(G)$, the number of colors appear on
$V(H_{x})$ is at most $\psi_{f}(H)$. Hence, $\psi_{f}(G[H])\leq
(\delta(G)+1)\psi_{f}(H)$. The following clear proposition
introduces a condition for equality.

\begin{pro}{
Let $G$ and $H$ be graphs for which ${\rm Fall}(G)\neq\emptyset$
and ${\rm Fall}(H)\neq\emptyset$ and $\psi_{f}(G)=\delta(G)+1$.
Then,
$\psi_{f}(G[H])=\psi_{f}(G)\psi_{f}(H)=(\delta(G)+1)\psi_{f}(H)$.}
\end{pro}
\begin{cor}{
 If $G$ is a tree or a complete graph or $C_{3k}$(for some
$k\in\mathbb{N}$) and $H$ is a graph such that ${\rm
Fall}(H)\neq\emptyset$, then,
$\psi_{f}(G[H])=\psi_{f}(G)\psi_{f}(H)=(\delta(G)+1)\psi_{f}(H)$.}
\end{cor}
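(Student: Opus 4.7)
The plan is to invoke the preceding proposition, so I only need to check for each of the three classes of graphs $G$ listed in the corollary that (i) $\mathrm{Fall}(G)\neq\emptyset$ and (ii) $\psi_f(G)=\delta(G)+1$. Once these two hypotheses are verified, the conclusion $\psi_f(G[H])=\psi_f(G)\psi_f(H)=(\delta(G)+1)\psi_f(H)$ is immediate from the proposition. It is also convenient to recall the general upper bound $\psi_f(G)\leq\delta(G)+1$, which holds whenever $\mathrm{Fall}(G)\neq\emptyset$: in any fall $k$-coloring, a vertex of minimum degree must see $k$ distinct colors in its closed neighborhood of size $\delta(G)+1$.

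First I would handle the complete graph $K_n$: here $\delta(K_n)+1=n$, and assigning $n$ distinct colors to the $n$ vertices yields a fall $n$-coloring, so $\psi_f(K_n)=n=\delta(K_n)+1$. Next, for a tree $T$, if $T=K_1$ then $\delta+1=1$ and the single-vertex coloring is a fall $1$-coloring; otherwise $\delta(T)=1$ and $T$ is bipartite, hence admits a proper $2$-coloring, which is automatically a fall $2$-coloring because every vertex has a neighbor (of the opposite color) in its closed neighborhood. Combined with the upper bound, this gives $\psi_f(T)=2=\delta(T)+1$. Finally, for $C_{3k}$, the standard $3$-periodic coloring $1,2,3,1,2,3,\dots$ around the cycle is proper and every vertex sees its own color together with both other colors on its two neighbors, so it is a fall $3$-coloring; since $\delta(C_{3k})+1=3$, the upper bound yields $\psi_f(C_{3k})=3=\delta(C_{3k})+1$.

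Each of the three verifications is straightforward; there is no real obstacle, only a small amount of case checking. With hypotheses (i) and (ii) confirmed in every case, the previous proposition delivers the stated equality.
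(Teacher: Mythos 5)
Your proposal is correct and follows exactly the route the paper intends: the corollary is stated as an immediate consequence of the preceding proposition, and your case-by-case verification that trees, complete graphs, and $C_{3k}$ satisfy ${\rm Fall}(G)\neq\emptyset$ and $\psi_{f}(G)=\delta(G)+1$ (using the upper bound $\psi_{f}(G)\leq\delta(G)+1$ at a minimum-degree vertex) is precisely the omitted routine check. Nothing is missing.
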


\section{Type-II graph homomorphisms and lexicographic product of
graphs}

Now we study a type of graph homomorphisms that is related to
fall colorings of graphs.

\begin{defin}{ Let $G$ and $H$ be graphs. A function
$f:V(G)\rightarrow V(H)$ is called a type-II graph homomorphism
from $G$ to $H$ if $f$ satisfies the following two conditions.
\\
\\
1) $ \{u,v\}\in E(G)\Rightarrow \{f(u),f(v)\}\in E(H)$.
\\
2) $ \{u_{1},v_{1}\}\in E(H)\Rightarrow \forall v\in
f^{-1}(v_{1}): \exists u\in f^{-1}(u_{1})$ s.t $\{u,v\}\in E(G)$.
}\end{defin}

Type-II graph homomorphisms introduced by Laskar and Lyle in 2009
in \cite{las}.\ They showed that for any graph $G$, $k\in {\rm
Fall}(G)$ iff there exists a type-II graph homomorphism from $G$
to $K_{k}$. Note that every type-II graph homomorphism from a
graph $G$ to a complete graph, is surjective. If $f_{1}$ is a
type-II graph homomorphism from $G$ to $H$ and $f_{2}$ is a
type-II graph homomorphism from $H$ to $I$, then, $f_{2}of_{1}$ is
a type-II graph homomorphism from $G$ to $I$. Also, if there
exists a type-II graph homomorphism from $G$ to $H$ and $k\in {\rm
Fall}(H)$, then, $k\in {\rm Fall}(G)$. If there exists a type-II
graph homomorphism from $G_{1}$ to $G_{2}$ and a type-II graph
homomorphism from $H_{1}$ to $H_{2}$, then, there exists a
type-II graph homomorphism from $G_{1}\square H_{1}$ to
$G_{2}\square H_{2}$. We prove a similar theorem for
lexicographic product of graphs.

\begin{theorem} { Let $G_{1}$, $G_{2}$, $H_{1}$ and $H_{2}$ be
graphs and $f_{1}$ be a type-II graph homomorphism from $G_{1}$ to
$G_{2}$ and $f_{2}$ be a surjective type-II graph homomorphism
from $H_{1}$ to $H_{2}$. Then, there exists a type-II graph
homomorphism $f_{3}$ from $G_{1}[H_{1}]$ to $G_{2}[H_{2}]$.}
\end{theorem}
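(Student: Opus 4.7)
The plan is to define $f_3 : V(G_1[H_1]) \to V(G_2[H_2])$ by the natural coordinatewise rule $f_3(x,y) := (f_1(x), f_2(y))$, and then verify the two defining conditions of a type-II homomorphism in turn.

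For condition 1 (edge preservation), I would observe that every edge of $G_1[H_1]$ falls into one of the two cases in the definition of the lexicographic product: either $\{x_1, x_2\} \in E(G_1)$, or else $x_1 = x_2$ and $\{y_1, y_2\} \in E(H_1)$. Applying condition 1 of $f_1$ to the first case and of $f_2$ to the second case, then reading off the edge definition of $G_2[H_2]$, immediately yields the required edge between $f_3(x_1, y_1)$ and $f_3(x_2, y_2)$.

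For condition 2, fix an edge $\{(a_1, b_1), (a_2, b_2)\} \in E(G_2[H_2])$ together with any $(x_1, y_1) \in f_3^{-1}((a_1, b_1))$; the task is to produce $(x_2, y_2) \in f_3^{-1}((a_2, b_2))$ with $\{(x_1, y_1), (x_2, y_2)\} \in E(G_1[H_1])$. I would again split according to which clause of the lexicographic-edge definition certifies the edge in $G_2[H_2]$. If $\{a_1, a_2\} \in E(G_2)$, then condition 2 of $f_1$ applied to $x_1 \in f_1^{-1}(a_1)$ yields some $x_2 \in f_1^{-1}(a_2)$ adjacent to $x_1$ in $G_1$; surjectivity of $f_2$ then supplies some $y_2 \in f_2^{-1}(b_2)$, and the edge $\{x_1, x_2\} \in E(G_1)$ by itself forces $\{(x_1, y_1), (x_2, y_2)\} \in E(G_1[H_1])$. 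If instead $a_1 = a_2$ and $\{b_1, b_2\} \in E(H_2)$, I set $x_2 := x_1$ and apply condition 2 of $f_2$ to $y_1 \in f_2^{-1}(b_1)$ to obtain $y_2 \in f_2^{-1}(b_2)$ with $\{y_1, y_2\} \in E(H_1)$, which gives the required edge inside the copy $H_1$ over $x_1$.

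The one delicate point, and the reason the hypothesis on $f_2$ is strictly stronger than the one on $f_1$, appears in the first subcase: after locating $x_2$ I still need a preimage of $b_2$ under $f_2$, and this is exactly where surjectivity of $f_2$ is used. No analogous assumption on $f_1$ is required, since the type-II property of $f_1$ applied to the edge $\{a_1, a_2\}$ already certifies that $f_1^{-1}(a_2)$ is non-empty. I expect this asymmetry to be the only substantive subtlety; the remainder of the verification is just a mechanical unwinding of the two product definitions.
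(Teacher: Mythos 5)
Your proposal is correct and follows essentially the same route as the paper's proof: the same coordinatewise map $f_3(x,y)=(f_1(x),f_2(y))$, the same case split on the two clauses of the lexicographic edge definition for both conditions, and the same use of surjectivity of $f_2$ in the first subcase of condition 2. Your closing remark correctly pinpoints why the surjectivity hypothesis is needed only for $f_2$.
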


\begin{proof} {
Let $f_{3}:V(G_{1}[H_{1}])\rightarrow V(G_{2}[H_{2}])$ be defined
the function which assigns to each $(g,h)\in V(G_{1}[H_{1}])$,
$f_{3}((g,h))=(f_{1}(g),f_{2}(h))$. For each $\{\
(x_{1},y_{1}),(x_{2},y_{2})\ \}\in E(G_{1}[H_{1}])$,
$\{x_{1},x_{2}\}\in E(G_{1})$ or ($x_{1}=x_{2}$ and
$\{y_{1},y_{2}\}\in E(H_{1})$). Therefore, $\{\
f_{1}(x_{1}),f_{1}(x_{2})\ \}\in E(G_{2})$ or
($f_{1}(x_{1})=f_{1}(x_{2})$ and $\{\ f_{2}(y_{1}),f_{2}(y_{2})\
\}\in E(H_{2})$). Hence, $\{\
(f_{1}(x_{1}),f_{2}(y_{1})),(f_{1}(x_{2}),f_{2}(y_{2}))\ \}\in
E(G_{2}[H_{2}])$ and consequently, the property 1 holds. Now for
each $\{\ (\alpha_{1},\beta_{1}),(\alpha_{2},\beta_{2})\ \}\in
E(G_{2}[H_{2}])$ and each $(u_{1},v_{1})\in
f_{3}^{-1}((\alpha_{1},\beta_{1}))$, there are two cases:

Case I) The case that $\{\alpha_{1},\alpha_{2}\}\in E(G_{2})$.
Since $f_{1}$ is a type-II graph homomorphism and $u_{1}\in
f_{1}^{-1}(\alpha_{1})$, there exists $u_{2}\in
f_{1}^{-1}(\alpha_{2})$ such that $\{u_{1},u_{2}\}\in E(G_{1})$.
Surjectivity of $f_{2}$ implies that there exists $v_{2}\in
f_{2}^{-1}(\beta_{2})$. Therefore, $(u_{2},v_{2})\in
f_{3}^{-1}((\alpha_{2},\beta_{2}))$ and $\{\
(u_{1},v_{1}),(u_{2},v_{2})\ \}\in E(G_{1}[H_{1}])$ and
accordingly, the property 2 holds.

Case II) The case that $\alpha_{1}=\alpha_{2}$ and
$\{\beta_{1},\beta_{2}\}\in E(H_{2})$. In this case, $u_{1}\in
f_{1}^{-1}(\alpha_{2})$ and since $f_{2}$ is a type-II graph
homomorphism and $v_{1}\in f_{2}^{-1}(\beta_{1})$, there exists
$v_{2}\in f_{2}^{-1}(\beta_{2})$ such that $\{v_{1},v_{2}\}\in
E(H_{1})$. Hence, $(u_{1},v_{2})\in
f_{3}^{-1}((\alpha_{2},\beta_{2}))$ and $\{\
(u_{1},v_{1}),(u_{1},v_{2})\ \}\in E(G_{1}[H_{1}])$ and
therefore, the property 2 holds. Thus, $f_{3}$ is a type-II graph
homomorphism. }\end{proof}

\begin{cor}{ If $G$ and $H$ are graphs such that $r_{1}\in
{\rm Fall}(G)$ and $r_{2}\in {\rm Fall}(H)$, then

$\chi_{f}(G[H])\leq \chi_{f}(G[K_{r_{2}}])\leq
\chi_{f}(K_{r_{1}}[K_{r_{2}}])\leq\psi_{f}(K_{r_{1}}[K_{r_{2}}])\leq\psi_{f}(G[K_{r_{2}}])\leq\psi_{f}(G[H])$.}
\end{cor}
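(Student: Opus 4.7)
The plan is to obtain the chain of inequalities as a direct consequence of the preceding theorem on type-II homomorphisms between lexicographic products, combined with the Laskar--Lyle characterization recalled earlier: $k\in{\rm Fall}(X)$ iff there is a type-II homomorphism $X\to K_{k}$, and every type-II homomorphism to a complete graph is surjective. In particular, the hypotheses $r_{1}\in{\rm Fall}(G)$ and $r_{2}\in{\rm Fall}(H)$ yield a type-II homomorphism $\varphi: G\to K_{r_{1}}$ and a surjective type-II homomorphism $\psi: H\to K_{r_{2}}$.

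Next I would apply the preceding theorem twice. First, take $G_{1}=G_{2}=G$ with $f_{1}=\mathrm{id}_{G}$ (clearly a type-II homomorphism) and $f_{2}=\psi$ (a surjective type-II homomorphism $H\to K_{r_{2}}$); the theorem produces a type-II homomorphism $G[H]\to G[K_{r_{2}}]$. Second, take $H_{1}=H_{2}=K_{r_{2}}$ with $f_{1}=\varphi: G\to K_{r_{1}}$ and $f_{2}=\mathrm{id}_{K_{r_{2}}}$ (surjective and type-II); this yields a type-II homomorphism $G[K_{r_{2}}]\to K_{r_{1}}[K_{r_{2}}]$.

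Using the monotonicity property recorded right after the definition of type-II homomorphism, namely that a type-II homomorphism $A\to B$ implies ${\rm Fall}(B)\subseteq{\rm Fall}(A)$, the two homomorphisms above give the containments
\[
{\rm Fall}(K_{r_{1}}[K_{r_{2}}])\ \subseteq\ {\rm Fall}(G[K_{r_{2}}])\ \subseteq\ {\rm Fall}(G[H]).
\]
Before taking minima and maxima, I must ensure none of these sets is empty; this is handled by Theorem \ref{subset}, since $r_{1}\in{\rm Fall}(K_{r_{1}})$ and $r_{2}\in{\rm Fall}(K_{r_{2}})$ already force $r_{1}r_{2}\in{\rm Fall}(K_{r_{1}}[K_{r_{2}}])$, and the displayed inclusions then propagate nonemptiness upward to ${\rm Fall}(G[K_{r_{2}}])$ and ${\rm Fall}(G[H])$. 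Taking minima of the three Fall sets gives $\chi_{f}(G[H])\le\chi_{f}(G[K_{r_{2}}])\le\chi_{f}(K_{r_{1}}[K_{r_{2}}])$, taking maxima gives $\psi_{f}(K_{r_{1}}[K_{r_{2}}])\le\psi_{f}(G[K_{r_{2}}])\le\psi_{f}(G[H])$, and the middle inequality $\chi_{f}(K_{r_{1}}[K_{r_{2}}])\le\psi_{f}(K_{r_{1}}[K_{r_{2}}])$ is immediate from the definitions. Concatenating these yields exactly the claimed chain. There is no real obstacle here; the only point requiring a moment of care is verifying that the identity maps used are indeed (surjective) type-II homomorphisms, which is trivial, and bookkeeping the nonemptiness of the Fall sets so that $\chi_{f}$ and $\psi_{f}$ are well defined throughout.
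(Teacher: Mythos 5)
Your proposal is correct and follows exactly the route the paper intends: the corollary is stated without proof as an immediate consequence of the preceding theorem, applied twice with the identity maps as one factor, together with the Laskar--Lyle characterization, the fact that a type-II homomorphism $A\to B$ gives ${\rm Fall}(B)\subseteq{\rm Fall}(A)$, and Theorem \ref{subset} for nonemptiness. Your care in checking that the identity is a (surjective) type-II homomorphism and that all three Fall sets are nonempty is exactly the bookkeeping the paper leaves implicit.
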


\section{Fall colorings of categorical product of graphs}

Let $G_{1}=(V_{1},E_{1})$ and $G_{2}=(V_{2},E_{2})$ be graphs. The
graph $G_{1}\times G_{2}:=(V_{1}\times V_{2},\{\ \{\
(x_{1},y_{1}),(x_{2},y_{2})\ \}\ |\ \{x_{1},x_{2}\}\in E(G_{1})\
{\rm and}\  \{y_{1},y_{2}\}\in E(G_{2})\ \})$ is called the
categorical product of $G$ and $H$.

Categorical product of graphs is commutative and associative up to
isomorphism (For each arbitrary graphs $G_{1},\ G_{2}$ and
$G_{3},\ G_{1}\times G_{2}$ and $G_{2}\times G_{1}$ are
isomorphic, also, $(G_{1}\times G_{2})\times G_{3}$ and
$G_{1}\times (G_{2}\times G_{3})$ are isomorphic.). For arbitrary
graphs $G$ and $H$, if $E(G)=\emptyset$ or $E(H)=\emptyset$, then,
$E(G\times H)=\emptyset$ and therefore, $G\times H$ has only a
fall 1-coloring and ${\rm Fall}(G\times H)=\{1\}$. Thus,
hereafter, let's focus on nonempty edge set graphs, unless stated
otherwise. Firstly, note that ${\rm Fall}(G:=(\ \{a,b,c,d\}\ ,\
\{\ \{a,b\},\{b,c\},\{c,a\},\{d,a\}\ \}\ )=\emptyset$ and ${\rm
Fall}(G\times G)=\emptyset$. Secondly, note that ${\rm
Fall}(C_{5}:=(\ \{0,1,2,3,4\}\ ,\ \{\
\{0,1\},\{1,2\},\{2,3\},\{3,4\},\{4,0\}\ \}\ ))=\emptyset$, but
the function $f:V(C_{5}\times C_{5})\rightarrow [5]$ which
assigns to each $(i,j)$ of $V(C_{5}\times C_{5})$,
$f((i,j)):=$(the arithmetic residue of $i+2j$ modulo 5)$+1$ where
the last $+$ is the natural summation in $\mathbb{Z}$, is a fall
5-coloring of $C_{5}\times C_{5}$, and therefore, ${\rm
Fall}(C_{5}\times C_{5})\neq\emptyset$. The next theorem implies
that if ${\rm Fall}(G)\neq\emptyset$ or ${\rm
Fall}(H)\neq\emptyset$, then, ${\rm Fall}(G\times
H)\neq\emptyset$.

\begin{theorem}{ \label{thm4}
For each $n\in\mathbb{N}$ and each arbitrary graphs
$G_{1},\ldots,G_{n}$,

$\forall 1\leq i\leq n: {\rm Fall}(G_{i})\subseteq {\rm
Fall}(\times_{i=1}^{n}G_{i})$.}
\end{theorem}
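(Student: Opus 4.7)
The plan is to reduce to the binary case and then construct an explicit fall $k$-coloring of $\times_{i=1}^{n}G_{i}$ by pulling back the given fall coloring of the chosen factor along the first-coordinate projection. Using the commutativity and associativity (up to isomorphism) of the categorical product noted just before the theorem, I would first reorder the factors so that the index of interest becomes $1$; then, writing $\times_{i=1}^{n}G_{i}\cong \bigl(\times_{i=1}^{n-1}G_{i}\bigr)\times G_{n}$, a direct induction on $n$ reduces the claim to the binary statement: for any graphs $G$ and $H$ and any $k\in {\rm Fall}(G)$, one has $k\in {\rm Fall}(G\times H)$.

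For the binary case, fix a fall $k$-coloring $f:V(G)\rightarrow [k]$ and define $\tilde{f}:V(G\times H)\rightarrow [k]$ by $\tilde{f}(x,y):=f(x)$. Adjacent vertices $(x_{1},y_{1})$ and $(x_{2},y_{2})$ in $G\times H$ satisfy $\{x_{1},x_{2}\}\in E(G)$ by definition of the categorical product, so $f(x_{1})\neq f(x_{2})$ and $\tilde{f}$ is a proper $k$-coloring; it is also surjective onto $[k]$ since $f$ is surjective and $V(H)\neq\emptyset$.

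It remains to verify that every vertex is colorful under $\tilde{f}$. Fix $(x,y)\in V(G\times H)$ and a color $c\in [k]\setminus \{f(x)\}$. Because $f$ is a fall coloring of $G$, there exists $x'\in V(G)$ with $\{x,x'\}\in E(G)$ and $f(x')=c$; picking any $y'\in V(H)$ with $\{y,y'\}\in E(H)$ then produces a neighbor $(x',y')$ of $(x,y)$ in $G\times H$ carrying color $c$. The main obstacle is securing the partner $y'$: under the standing convention of this section that every factor has a nonempty edge set, together with the implicit absence of isolated vertices (since an isolated $y\in V(H)$ would make $(x,y)$ isolated in $G\times H$ and preclude any fall $k$-coloring of the product with $k\geq 2$), such a $y'$ always exists. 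Apart from this technical point, the verification is a direct transfer of the fall condition from $G$ to $G\times H$ along the projection onto the first coordinate.
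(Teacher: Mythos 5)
Your proposal is correct and follows essentially the same route as the paper: reduce to the binary case using commutativity and associativity of the categorical product, then pull the fall $k$-coloring of the chosen factor back along the first-coordinate projection via $(u,v)\mapsto f(u)$. You are in fact more careful than the paper's one-line verification, since you correctly observe that the colorful condition requires every vertex of the other factor to have a neighbor --- i.e., the remaining factors must have no isolated vertices, a hypothesis the theorem's ``arbitrary graphs'' phrasing and the section's ``nonempty edge set'' convention leave implicit.
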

\begin{proof}{ Since categorical product of graphs is commutative and
associative up to isomorphism, it suffices to prove that ${\rm
Fall}(G_{1})\subseteq {\rm Fall}(G_{1}\times G_{2})$. If ${\rm
Fall}(G_{1})=\emptyset$, the theorem holds trivially. For each
$k\in {\rm Fall}(G_{1})$, there exists a fall $k$-coloring $f$ of
$G_{1}$. Now, the function $g:V(G_{1}\times G_{2})\rightarrow
[k]$ which assigns to each $(u,v)\in V(G_{1}\times G_{2})$,
$g((u,v))=f(u)$ is a fall $k$-coloring of $G_{1}\times G_{2}$ and
therefore, $k\in {\rm Fall}(G_{1}\times G_{2})$. Hence, ${\rm
Fall}(G_{1})\subseteq {\rm Fall}(G_{1}\times G_{2})$. }\end{proof}

\begin{cor}{\label{twoinequalities} For each $n\in \mathbb{N}$
and each arbitrary graphs $G_{1},\ldots,G_{n}$ such that for each
$i\in [n]$, ${\rm Fall}(G_{i})\neq\emptyset$, the following
inequalities hold.

$\chi_{f}(\times_{i=1}^{n}G_{i})\leq \min\{\ \chi_{f}(G_{i})\ |\
i\in
 [n]\ \}\leq \max\{\ \psi_{f}(G_{i})\ |\ i\in [n]\ \}\leq
         \psi_{f}(\times_{i=1}^{n}G_{i})$.}
\end{cor}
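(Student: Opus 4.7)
The plan is to derive all three inequalities as essentially direct corollaries of Theorem \ref{thm4}, with no real new content beyond unpacking the definitions of $\chi_f$ and $\psi_f$ as $\min$ and $\max$ of the sets ${\rm Fall}(\cdot)$. Since all hypotheses ensure that every ${\rm Fall}(G_i)$ is nonempty, and since Theorem \ref{thm4} forces ${\rm Fall}(\times_{i=1}^{n} G_i)$ to contain each ${\rm Fall}(G_i)$, the set ${\rm Fall}(\times_{i=1}^{n} G_i)$ is in particular nonempty, so $\chi_f(\times_{i=1}^{n} G_i)$ and $\psi_f(\times_{i=1}^{n} G_i)$ are well defined.

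First, I would handle the outer two inequalities. By Theorem \ref{thm4}, for every $i \in [n]$ we have ${\rm Fall}(G_i) \subseteq {\rm Fall}(\times_{i=1}^{n} G_i)$, hence
\[
\bigcup_{i \in [n]} {\rm Fall}(G_i) \;\subseteq\; {\rm Fall}\Bigl(\times_{i=1}^{n} G_i\Bigr).
\]
Taking the minimum of both sides gives $\chi_f(\times_{i=1}^{n} G_i) \le \min\{\chi_f(G_i) \mid i \in [n]\}$, and taking the maximum gives $\psi_f(\times_{i=1}^{n} G_i) \ge \max\{\psi_f(G_i) \mid i \in [n]\}$. This takes care of the first and last inequality of the chain.

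For the middle inequality, I would simply note that for every $i \in [n]$, $\chi_f(G_i) \le \psi_f(G_i)$ holds by definition (both are extrema over the same nonempty set ${\rm Fall}(G_i)$). Picking any $i_0$ that attains the minimum on the left and any $i_1$ that attains the maximum on the right, one gets $\min_i \chi_f(G_i) = \chi_f(G_{i_0}) \le \psi_f(G_{i_0}) \le \psi_f(G_{i_1}) = \max_i \psi_f(G_i)$.

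There is no real obstacle here: the content has already been packed into Theorem \ref{thm4}, and the corollary is just the order-theoretic translation from containment of the fall-coloring sets to inequalities between their extrema. The only point of care is to invoke the nonemptiness hypothesis explicitly when talking about $\min$ and $\max$, so that $\chi_f$ and $\psi_f$ of each $G_i$ (and hence of the product) are defined.
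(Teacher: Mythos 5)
Your proof is correct and is exactly the intended derivation: the paper states this as an immediate corollary of Theorem \ref{thm4} without writing out a proof, and your unpacking of $\chi_{f}$ and $\psi_{f}$ as the $\min$ and $\max$ of the (nonempty) sets ${\rm Fall}(\cdot)$, together with the containment from Theorem \ref{thm4} and the trivial $\chi_{f}(G_{i_{0}})\leq\psi_{f}(G_{i_{0}})$ for the middle inequality, is precisely what is needed.
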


Now again type-II graph homomorphisms:

\begin{theorem}{  Let $G_{1}$, $G_{2}$, $H_{1}$ and $H_{2}$ be
graphs and $f_{1}$ be a type-II graph homomorphism from $G_{1}$ to
$G_{2}$ and $f_{2}$ be a type-II graph homomorphism from $H_{1}$
to $H_{2}$. Then, there exists a type-II graph homomorphism
$f_{3}$ from $G_{1}\times H_{1}$ to $G_{2}\times H_{2}$.}
\end{theorem}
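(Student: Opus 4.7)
The plan is to mimic the construction in the lexicographic case by defining the coordinate-wise map $f_{3}:V(G_{1}\times H_{1})\rightarrow V(G_{2}\times H_{2})$ via $f_{3}((g,h)):=(f_{1}(g),f_{2}(h))$, and then to verify the two defining properties of a type-II homomorphism directly, exploiting the fact that edges in the categorical product factor completely into edges of the two components.

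For property 1, I would pick an arbitrary edge $\{(x_{1},y_{1}),(x_{2},y_{2})\}\in E(G_{1}\times H_{1})$; by definition of the categorical product this simultaneously forces $\{x_{1},x_{2}\}\in E(G_{1})$ and $\{y_{1},y_{2}\}\in E(H_{1})$. Applying property 1 for $f_{1}$ and $f_{2}$ independently delivers $\{f_{1}(x_{1}),f_{1}(x_{2})\}\in E(G_{2})$ and $\{f_{2}(y_{1}),f_{2}(y_{2})\}\in E(H_{2})$, which is exactly what is needed for $\{(f_{1}(x_{1}),f_{2}(y_{1})),(f_{1}(x_{2}),f_{2}(y_{2}))\}$ to be an edge of $G_{2}\times H_{2}$.

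For property 2, I would take any $\{(\alpha_{1},\beta_{1}),(\alpha_{2},\beta_{2})\}\in E(G_{2}\times H_{2})$ and any preimage $(u_{1},v_{1})\in f_{3}^{-1}((\alpha_{1},\beta_{1}))$, and aim to produce a preimage $(u_{2},v_{2})$ of $(\alpha_{2},\beta_{2})$ adjacent to $(u_{1},v_{1})$ in $G_{1}\times H_{1}$. Unpacking the edge gives $\{\alpha_{1},\alpha_{2}\}\in E(G_{2})$ together with $\{\beta_{1},\beta_{2}\}\in E(H_{2})$. Applying property 2 of $f_{1}$ to the edge $\{\alpha_{1},\alpha_{2}\}$ and the preimage $u_{1}$ of $\alpha_{1}$ yields some $u_{2}\in f_{1}^{-1}(\alpha_{2})$ with $\{u_{1},u_{2}\}\in E(G_{1})$; applying property 2 of $f_{2}$ analogously to $\{\beta_{1},\beta_{2}\}$ and $v_{1}\in f_{2}^{-1}(\beta_{1})$ yields $v_{2}\in f_{2}^{-1}(\beta_{2})$ with $\{v_{1},v_{2}\}\in E(H_{1})$. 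The pair $(u_{2},v_{2})$ then lies in $f_{3}^{-1}((\alpha_{2},\beta_{2}))$ and satisfies $\{(u_{1},v_{1}),(u_{2},v_{2})\}\in E(G_{1}\times H_{1})$.

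I do not expect a genuine obstacle here: in contrast with the lexicographic theorem, where property 2 split into two subcases (an edge lying inside a single fiber $H_{2}$ versus an edge crossing between fibers) and the crossing case needed the auxiliary surjectivity of $f_{2}$ to produce a second coordinate, in the categorical product every edge of $G_{2}\times H_{2}$ automatically comes from edges in \emph{both} factors at once, so both applications of property 2 fire in parallel and no surjectivity hypothesis is needed.
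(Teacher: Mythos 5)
Your proposal is correct and follows essentially the same route as the paper: the coordinate-wise map $(g,h)\mapsto(f_{1}(g),f_{2}(h))$, with property 1 and property 2 each verified by applying the corresponding property of $f_{1}$ and $f_{2}$ in parallel to the two factor edges that every edge of a categorical product decomposes into. Your closing remark about why no surjectivity hypothesis is needed here, unlike in the lexicographic case, is also accurate.
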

\begin{proof}{ Let $f_{3}:V(G_{1}\times H_{1})\rightarrow
V(G_{2}\times H_{2})$ be defined the function which assigns to
each $(g,h)\in V(G_{1}\times H_{1})$,
$f_{3}((g,h))=(f_{1}(g),f_{2}(h))$. For each $\{\
(x_{1},y_{1}),(x_{2},y_{2})\ \}\in E(G_{1}\times H_{1})$,
$\{x_{1},x_{2}\}\in E(G_{1})$ and $\{y_{1},y_{2}\}\in E(H_{1})$.
Therefore, $\{\ f_{1}(x_{1}),f_{1}(x_{2})\ \}\in E(G_{2})$ and
$\{\ f_{2}(y_{1}),f_{2}(y_{2})\ \}\in E(H_{2})$. Hence, $\{\
f_{3}((x_{1},y_{1})),f_{3}((x_{2},y_{2}))\ \}\in E(G_{2}\times
H_{2})$ and therefore, the property 1 of type-II graph
homomorphisms holds. Now for each $\{\ (a,b),(c,d)\ \}\in
E(G_{2}\times H_{2})$ and each $(\alpha,\beta)\in f_{3}^{-1}(\
(c,d)\ )$, $\alpha\in f_{1}^{-1}(c)$ and $\beta\in f_{2}^{-1}(d)$.
So, there exist $x\in f_{1}^{-1}(a)$ and $y\in f_{2}^{-1}(b)$ such
that $\{x,\alpha\}\in E(G_{1})$ and $\{y,\beta\}\in E(H_{1})$,
hence, $(x,y)\in f_{3}^{-1}(\ (a,b)\ )$ and $\{\
(x,y),(\alpha,\beta)\ \}\in E(G_{1}\times H_{1})$. So, the
property 2 of type-II graph homomorphisms holds, too.
Consequently, $f_{3}$ is a type-II graph homomorphism.

}\end{proof}

We know that if $f$ is a type-II graph homomorphism from $G$ to
$H$ and $k\in {\rm Fall}(H)$, then, $k\in {\rm Fall}(G)$. Also,
for each graph $G$ and each natural number $k$, $k\in {\rm
Fall}(G)$ iff there exists a type-II graph homomorphism from $G$
to $K_{k}$. Therefore, the previous theorem implies the following
corollary.

\begin{cor}{ Let $n\in \mathbb{N}$ and for each $i\in [n]$,
$G_{i}$ be a graph and $k_{i}\in {\rm Fall}(G_{i})$. Then, there
exists a type-II graph homomorphism from $\times_{i=1}^{n}G_{i}$
to $\times_{i=1}^{n}K_{k_{i}}$ and ${\rm
Fall}(\times_{i=1}^{n}K_{k_{i}})\subseteq {\rm
Fall}(\times_{i=1}^{n}G_{i})$. Also,
$\chi_{f}(\times_{i=1}^{n}G_{i})\leq\chi_{f}(\times_{i=1}^{n}K_{k_{i}})\leq\psi_{f}(\times_{i=1}^{n}K_{k_{i}})\leq\psi_{f}(\times_{i=1}^{n}G_{i})$.
These inequalities can easily extend to more inequalities in
general. For example, in the case $n=2$, $\chi_{f}(G_{1}\times
G_{2})\leq\left\{\begin{array}{l} \chi_{f}(K_{k_{1}}\times
G_{2})\\ \chi_{f}(G_1\times K_{k_{2}})
\end{array}\right.\leq\chi_{f}(K_{k_{1}}\times
K_{k_{2}})\leq\psi_{f}(K_{k_{1}}\times
K_{k_{2}})\leq\left\{\begin{array}{l}\psi_{f}(K_{k_{1}}\times
G_{2})\\
\psi_{f}(G_{1}\times
K_{k_{2}})\end{array}\right.\leq\psi_{f}(G_{1}\times G_{2})$.

}
\end{cor}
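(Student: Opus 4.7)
The plan is to assemble this corollary directly from the machinery that has just been developed: the Laskar–Lyle characterization of fall colorings via type-II homomorphisms to complete graphs, together with the previous theorem that type-II homomorphisms factor through categorical products.

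First I would invoke the Laskar–Lyle characterization: since $k_i \in \mathrm{Fall}(G_i)$, for each $i \in [n]$ there is a type-II graph homomorphism $\varphi_i : G_i \rightarrow K_{k_i}$. Next I would iterate the preceding theorem, which handles binary categorical products, to obtain a type-II graph homomorphism
\[
\Phi = \varphi_1 \times \cdots \times \varphi_n : \times_{i=1}^{n} G_i \longrightarrow \times_{i=1}^{n} K_{k_i},
\]
defined coordinatewise by $\Phi((g_1,\ldots,g_n)) = (\varphi_1(g_1),\ldots,\varphi_n(g_n))$. The induction is clean because categorical product is associative up to isomorphism (as already noted in the paper), so after building $\varphi_1 \times \varphi_2$ as a type-II homomorphism on $G_1 \times G_2 \to K_{k_1} \times K_{k_2}$, I can group the remaining factors and apply the binary theorem again.

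Having $\Phi$, the inclusion $\mathrm{Fall}(\times_{i=1}^{n} K_{k_i}) \subseteq \mathrm{Fall}(\times_{i=1}^{n} G_i)$ is immediate from the already-noted fact that the existence of a type-II graph homomorphism $G \to H$ pulls back $\mathrm{Fall}(H)$ into $\mathrm{Fall}(G)$ (compose $\Phi$ with a type-II homomorphism from $\times K_{k_i}$ to $K_k$). Taking the minimum and maximum over this inclusion yields
\[
\chi_f(\times_{i=1}^{n} G_i) \leq \chi_f(\times_{i=1}^{n} K_{k_i}) \leq \psi_f(\times_{i=1}^{n} K_{k_i}) \leq \psi_f(\times_{i=1}^{n} G_i),
\]
with the middle step being trivial because $\chi_f \leq \psi_f$ always.

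For the refined chain in the $n=2$ case, I would simply repeat the argument while replacing only one factor at a time: keeping $G_2$ fixed and replacing $G_1$ by $K_{k_1}$ gives a type-II homomorphism $G_1 \times G_2 \to K_{k_1} \times G_2$ by using $\varphi_1 \times \mathrm{id}_{G_2}$, and symmetrically for the other factor. These intermediate type-II homomorphisms yield the branched inequalities by the same $\min/\max$ reasoning. I do not anticipate a genuine obstacle here; the whole corollary is a packaging result whose work was already done in the binary type-II theorem and in the Laskar–Lyle equivalence. The only mild care needed is to verify that iterating the binary theorem is well-defined under the associativity isomorphism, which is a straightforward bookkeeping check.
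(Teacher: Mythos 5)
Your proposal is correct and follows essentially the same route the paper intends: it combines the Laskar--Lyle characterization ($k_i\in{\rm Fall}(G_i)$ iff there is a type-II homomorphism $G_i\to K_{k_i}$), the preceding theorem on type-II homomorphisms of categorical products (iterated via associativity), and the fact that a type-II homomorphism $G\to H$ pulls ${\rm Fall}(H)$ back into ${\rm Fall}(G)$. The only detail worth recording explicitly is that the identity map is a type-II homomorphism (so that $\varphi_1\times{\rm id}_{G_2}$ is legitimate for the branched $n=2$ chain), which is immediate from the definition.
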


Dunbar, et al. in \cite{dun} showed that for each $m,n\in
\mathbb{N}\setminus\{1\}$, ${\rm Fall}(K_{m}\times
K_{n})=\{m,n\}$. They also showed that if $n\in
\mathbb{N}\setminus\{1\}$ and for each $i\in [n]$, $r_{i}\in
\mathbb{N}\setminus\{1\}$, then, $\{r_{1},...,r_{n}\}\subseteq
{\rm Fall}(\times_{i=1}^{n}K_{r_{i}})$. They constructed a fall
6-coloring of $K_{2}\times K_{3}\times K_{4}$ and asked for
conditions for a finite and with more than two elements set
$S:=\{r_{1},...,r_{n}\}\subseteq\mathbb{N}\setminus\{1\}$ for
which $S\subsetneqq {\rm Fall}(\times_{i=1}^{n}K_{r_{i}})$.

\begin{theorem}{ Let $n\geq3$, $S:=\{r_{1},...,r_{n}\}\subseteq \mathbb{N}\setminus
\{1\}$, $r_{1}<r_{2}<...<r_{n}$ and $S$ contain at least one even
integer. Then, $S\subsetneqq {\rm
Fall}(\times_{i=1}^{n}K_{r_{i}})$, besides, ${\rm
Fall}(\times_{i=1}^{n}K_{r_{i}})$ contains an integer greater
than $r_{n}$.

}\end{theorem}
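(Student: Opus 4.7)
The plan is to prove both conclusions simultaneously by exhibiting a fall $k$-coloring of $\times_{i=1}^{n}K_{r_i}$ with $k>r_n$: such a $k$ lies in ${\rm Fall}(\times_{i=1}^{n}K_{r_i})$ and, since $k>\max S$, witnesses both $k\notin S$ and the existence of an element of ${\rm Fall}(\times_{i=1}^{n}K_{r_i})$ strictly larger than $r_n$.

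My first step is a reduction to the three-factor case. Let $j\in[n]$ be the index of an even element $r_j$, and let $a\in[n]\setminus\{j,n\}$ be any third index (available since $n\geq 3$). Suppose one can produce a fall $k$-coloring $f$ of $K_{r_a}\times K_{r_j}\times K_{r_n}$ with $k>r_n$. Define $F:V(\times_{i=1}^{n}K_{r_i})\to[k]$ by $F(x_1,\ldots,x_n):=f(x_a,x_j,x_n)$. Two vertices adjacent in $\times_{i=1}^{n}K_{r_i}$ differ in every coordinate, hence also in coordinates $a,j,n$, so $F$ is a proper $k$-coloring. For the fall property, given a vertex $v$ and a color $c\neq F(v)$, I choose a neighbor $w$ of $(v_a,v_j,v_n)$ in the triple product with $f(w)=c$ and extend $w$ to a vertex of the full product by selecting, for each remaining coordinate $i\notin\{a,j,n\}$, any value distinct from $v_i$ (possible since $r_i\geq 2$); the extension is adjacent to $v$ and has $F$-value $c$.

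The second step is the construction on the triple product. The Dunbar--et al.\ fall $6$-coloring of $K_2\times K_3\times K_4$ is the prototype, and it already resolves the case $S=\{2,3,4\}$. For the general case, I write the even factor as $K_{2m}$ and use the classical fact that $K_{2m}$ admits a $1$-factorization into $2m-1$ perfect matchings $M_1,\ldots,M_{2m-1}$. The idea is to build a type-II graph homomorphism (in the sense of the paper) from $K_{r_a}\times K_{2m}\times K_{r_n}$ to some $K_k$ with $k>r_n$: one labels a vertex $(x_a,x_j,x_n)$ by a pair whose first entry records the matching $M_\ell$ containing a canonical edge incident to $x_j$ and whose second entry is determined by $x_a$ and $x_n$. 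The $1$-factorization supplies the extra combinatorial freedom needed to push the palette past $r_n$ while preserving the closed-neighborhood covering.

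The main obstacle is verifying that the constructed coloring on the triple product really is fall, i.e.\ that every vertex sees every color on its closed neighborhood of size $(r_a-1)(r_j-1)(r_n-1)+1$, while simultaneously arranging $k>r_n$. The parity hypothesis is essential at exactly this point, because the $1$-factorization used to produce extra color classes exists precisely when the even factor has even order; when every $r_i$ is odd, the natural pairing breaks for parity reasons and one is returned to the limiting bound ${\rm Fall}(K_m\times K_n)=\{m,n\}$ of Dunbar et al., which does not suffice.
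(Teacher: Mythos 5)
Your reduction to a three-coordinate subproduct is sound and is essentially the mechanism the paper uses (it invokes its Theorem 4, ${\rm Fall}(G_i)\subseteq{\rm Fall}(\times_i G_i)$, to lift a fall coloring of a triple $K_{r_a}\times K_{r_j}\times K_{r_n}$ to the full product), though you should note the degenerate case $j=n$, where your chosen index set $\{a,j,n\}$ collapses to two coordinates and you must pick a genuinely third index. The real problem is that the heart of the proof --- an explicit fall $k$-coloring of the triple product with $k>r_n$ --- is never actually constructed. You describe ``the idea'' of labelling a vertex by the matching of a $1$-factorization of $K_{2m}$ containing ``a canonical edge incident to $x_j$'' together with ``a second entry determined by $x_a$ and $x_n$,'' but you never define the map, never check it is a proper coloring, never check the closed-neighborhood covering, and never count the colors to see that the palette exceeds $r_n$; you then concede that verifying the fall property ``is the main obstacle.'' That obstacle is the theorem. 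It is also not clear your device can work as stated: a $1$-factorization of the single even factor gives only $2m-1\le r_n-1$ classes in that coordinate, and it is not evident how the ``second entry'' inflates this past $r_n$ while keeping every class dominating in the categorical product, where color classes must meet every closed neighborhood $\{(x,y,z)\}\cup\{(x',y',z'): x'\ne x,\ y'\ne y,\ z'\ne z\}$.

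For comparison, the paper's construction is concrete: it uses a derangement $\sigma$ of the middle index set to form four-element color classes $\{(1,i,1),(1,\sigma(i),2),(2,i,2),(2,\sigma(i),1)\}$ inside a $2\times t\times 2$ block, and then either fills the remaining layers $z=i$ ($i\ge 3$) with one class each (giving $r_n+t-2>r_n$ colors when some $r_1=2$), or, when the even entries are larger, tiles the product by such blocks to get $\tfrac{r_sr_jr_n}{4}$ colors, with a separate patch (an extra singleton layer class) when only one $r_i$ is even. The case analysis on where the even entries sit is exactly where the evenness hypothesis is consumed, and none of that work appears in your proposal. As it stands the argument has a genuine gap: the existence of the required fall coloring of the triple product is asserted, not proved.
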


\begin{proof}{

There are five cases.

Case I) The case that $r_{1}=2$. In this case, let $t\in
\{r_{1},...,r_{n}\}\setminus \{r_{1},r_{n}\}$. Consider
$K_{2}\times K_{t}\times K_{r_{n}}$. Let $\sigma$ be a
disarrangement of $[t]$ ( a permutation $\sigma$ of $[t]$ such
that for each $i\in [t]$, $\sigma (i)\neq i$). Obviously, $\{\
\{(1,i,1),(1,\sigma(i),2),(2,i,2),(2,\sigma(i),1)\}\  |\ 1\leq
i\leq t\ \}\ \bigcup\ \{\ \{(x,y,z)\ |\ (x,y,z)\in K_{2}\times
K_{t}\times K_{r_{n}},\ z=i\ \}\ |\ 3\leq i\leq r_{n}\ \}$ is the
set of color classes of a fall $(r_{n}+t-2)$-coloring of
$K_{2}\times K_{t}\times K_{r_{n}}$. But $r_{n}+t-2>r_{n}$ and
therefore, in this case, ${\rm Fall}(K_{2}\times K_{t}\times
K_{r_{n}})$ contains an integer greater than $r_{n}$.

Case II) The case that $2<r_{1}$ and $\{r_{1},...,r_{n}\}$
contains at least two distinct even integers such that one of
them is $r_{n}$ and the other is $r_{s}$ that $s\in
\{1,...,n-1\}$. Let $r_{j}\in \{r_{1},...,r_{n}\}\setminus
\{r_{s},r_{n}\}$. Consider $K_{r_{s}}\times K_{r_{j}}\times
K_{r_{n}}$ and a disarrangement $\sigma$ of $[r_{j}]$. For each
$1\leq t\leq r_{j}$, color the vertices $(1,t,1),(1,\sigma
(t),2),(2,t,2)$ and $(2,\sigma (t),1)$ with the color $t$ and
color each other vertex $(x,y,z)$ with the color
$\lfloor\frac{x-1}{2}\rfloor(\frac{r_{j}r_{n}}{2})+\lfloor\frac{z-1}{2}\rfloor
r_{j}+{\rm \ the\ color\ of}\
(x-2\lfloor\frac{x-1}{2}\rfloor,y,z-2\lfloor\frac{z-1}{2}\rfloor)$.
This is a fall $\frac{r_{s}r_{j}r_{n}}{4}$-coloring of
$K_{r_{s}}\times K_{r_{j}}\times K_{r_{n}}$. Since $2<r_{1}$,
$\frac{r_{s}r_{j}r_{n}}{4}>\max\{r_{s},r_{j},r_{n}\}$. Hence,
Theorem \ref{thm4} implies that ${\rm
Fall}(\times_{i=1}^{n}K_{r_{i}})$ contains an integer greater
than $r_{n}$.

Case III) The case that $2<r_{1}$ and $\{r_{1},...,r_{n}\}$
contains at least two distinct even integers such that none of
them is $r_{n}$. Similar to the case II, ${\rm
Fall}(\times_{i=1}^{n}K_{r_{i}})$ contains an integer greater
than $r_{n}$.

Case IV) The case that $2<r_{1}$ and $\{r_{1},...,r_{n}\}$
contains exactly one even integer and $r_{n}$ is even. In this
case, consider $K_{r_{n-2}-1}\times K_{r_{n-1}}\times K_{r_{n}}$
and a disarrangement $\sigma$ of $[r_{n-1}]$. For each $1\leq
t\leq r_{n-1}$, color the vertices $(1,t,1),(1,\sigma
(t),2),(2,t,2)$ and $(2,\sigma (t),1)$ with the color $t$ and
color each other vertex $(x,y,z)$ of $K_{r_{n-2}-1}\times
K_{r_{n-1}}\times K_{r_{n}}$ with the color
$\lfloor\frac{x-1}{2}\rfloor(\frac{r_{n-1}r_{n}}{2})+\lfloor\frac{z-1}{2}\rfloor
r_{n-1}+{\rm \ the\ color\ of}\
(x-2\lfloor\frac{x-1}{2}\rfloor,y,z-2\lfloor\frac{z-1}{2}\rfloor)$.
Also, color each vertex $(r_{n-2},y,z)$ of $K_{r_{n-2}}\times
K_{r_{n-1}}\times K_{r_{n}}$ with the color
$\frac{(r_{n-2}-1)r_{n-1}r_{n}}{4}+1$. Therefore, a fall
$(\frac{(r_{n-2}-1)r_{n-1}r_{n}}{4}+1)$-coloring of
$K_{r_{n-2}}\times K_{r_{n-1}}\times K_{r_{n}}$ and also of
$\times_{i=1}^{n} K_{r_{i}}$ yields. But,
$\frac{(r_{n-2}-1)r_{n-1}r_{n}}{4}+1>r_{n}$. Thus,  ${\rm
Fall}(K_{2}\times K_{t}\times K_{r_{n}})$ and therefore ${\rm
Fall}(\times_{i=1}^{n}K_{r_{i}})$ contains an integer greater than
$r_{n}$.

Case V) The case that $2<r_{1}$ and $\{r_{1},...,r_{n}\}$
contains exactly one even integer and $r_{n}$ is odd. In this
case, similar to the case IV, ${\rm
Fall}(\times_{i=1}^{n}K_{r_{i}})$ contains an integer greater
than $r_{n}$.

Accordingly, in all cases, $\{r_{1},...,r_{n}\}\subsetneqq {\rm
Fall}(\times_{i=1}^{n}K_{r_{i}})$. Besides, ${\rm
Fall}(\times_{i=1}^{n}K_{r_{i}})$ contains an integer greater
than $r_{n}$.

} \end{proof}

Even though Dunbar, et al. in \cite{dun} constructed a fall
6-coloring of $K_{2}\times K_{3}\times K_{4}$, this theorem also
shows that in the corollary \ref{twoinequalities}, the inequality
$\max\{\ \psi_{f}(G_{i})\ |\ i\in [n]\ \}\leq
         \psi_{f}(\times_{i=1}^{n}G_{i})$
can be strict in many cases. But we conjecture that the inequality
$\chi_{f}(\times_{i=1}^{n}G_{i})\leq \min\{\ \chi_{f}(G_{i})\ |\
i\in
 [n]\ \}$ is always an equality.

\begin{con}{
For each $n\in \mathbb{N}$ and for each arbitrary graphs
$G_{1},\ldots,G_{n}$ such that for each $i\in [n]$, ${\rm
Fall}(G_{i})\neq\emptyset$, the following equality holds.

$\chi_{f}(\times_{i=1}^{n}G_{i})=\min\{\chi_{f}(G_{i})|\ i\in
 [n]\}$.
} \end{con}

\section{Fall colorings of union of graphs}

Let $n\in \mathbb{N}$ and for each $1\leq i\leq n$, $G_{i}$ be a
graph. The graph $(\ \bigcup_{i=1}^{n}\ (\{i\}\times V(G_{i}))\
,\ \bigcup_{i=1}^{n}\ \{\ \{(i,x),(i,y)\}\ |\ \{x,y\}\in
E(G_{i})\ \}\ )$ is called the union graph of $G_{1},...,G_{n}$
and is denoted by $\biguplus_{i=1}^{n}G_{i}$.

The following obvious theorem describes fall colorings of union
of graphs.

\begin{theorem}{
Let $n\in \mathbb{N}$ and for each $1\leq i\leq n$, $G_{i}$ be a
graph. Then, the following three statements hold.

1) If ${\rm Fall}(\biguplus_{i=1}^{n}G_{i})\neq\emptyset$, then,
for each $1\leq i\leq n$, ${\rm Fall}(G_{i})\neq\emptyset$.

2) ${\rm Fall}(\biguplus_{i=1}^{n}G_{i})=\bigcap_{i=1}^{n}{\rm
Fall}(G_{i})$.

3) If ${\rm Fall}(\biguplus_{i=1}^{n}G_{i})\neq\emptyset$, then, $
\chi_{f}(\biguplus_{i=1}^{n}G_{i})$=$\min\bigcap_{i=1}^{n}{\rm
Fall}(G_{i})$ and
$\psi_{f}(\biguplus_{i=1}^{n}G_{i})=\max\bigcap_{i=1}^{n}{\rm
Fall}(G_{i})$.}
\end{theorem}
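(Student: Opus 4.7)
The proof is essentially a definitional unpacking, relying on one structural observation: in the union graph $\biguplus_{i=1}^{n} G_i$, the closed neighborhood of any vertex $(i,v)$ equals $\{i\} \times N_{G_i}[v]$. In particular, closed neighborhoods never cross components. The plan is to prove statement (2) first, and then derive (1) and (3) as immediate consequences.

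For statement (2), the inclusion $\mathrm{Fall}(\biguplus_{i=1}^{n} G_i) \subseteq \bigcap_{i=1}^{n} \mathrm{Fall}(G_i)$ is handled as follows. Given $k \in \mathrm{Fall}(\biguplus_{i=1}^{n} G_i)$ with witnessing fall $k$-coloring $f$, define $f_i : V(G_i) \to [k]$ by $f_i(v) = f((i,v))$. Properness of $f_i$ follows from properness of $f$ on the edges inside the $i$-th component. To see that every vertex of $G_i$ is colorful under $f_i$, pick $v \in V(G_i)$; since $f$ is a fall $k$-coloring, $f(N[(i,v)]) = [k]$, and by the structural observation this equals $f_i(N_{G_i}[v])$. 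Hence $f_i$ is a fall $k$-coloring of $G_i$ and $k \in \mathrm{Fall}(G_i)$ for every $i$. For the reverse inclusion, given $k \in \bigcap_{i=1}^{n} \mathrm{Fall}(G_i)$, choose a fall $k$-coloring $f_i$ of each $G_i$ and assemble the function $f((i,v)) := f_i(v)$. Properness holds because edges of the union lie within single components, and colorfulness at each vertex $(i,v)$ again follows from the identity $N_{\biguplus G_j}[(i,v)] = \{i\} \times N_{G_i}[v]$ together with the colorfulness of $v$ under $f_i$.

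Statement (1) follows immediately: if $\mathrm{Fall}(\biguplus_{i=1}^{n} G_i) \neq \emptyset$, pick any $k$ in this set, and (2) gives $k \in \mathrm{Fall}(G_i)$ for each $i$, so none of the $\mathrm{Fall}(G_i)$ is empty. Statement (3) is a direct unfolding of the definitions $\chi_f(\cdot) = \min \mathrm{Fall}(\cdot)$ and $\psi_f(\cdot) = \max \mathrm{Fall}(\cdot)$ applied to the equality in (2).

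There is no real obstacle in this argument; the only subtlety worth stating cleanly is the component-disjointness of closed neighborhoods in the union construction, which is what makes the restriction-and-assembly argument work symmetrically in both directions. Everything else is bookkeeping, so the proof can be kept quite short.
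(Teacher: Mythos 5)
Your proof is correct; the paper states this theorem without proof, labeling it ``obvious,'' and your restriction-and-assembly argument via the observation that closed neighborhoods in $\biguplus_{i=1}^{n}G_{i}$ never cross components is exactly the intended (and standard) justification. Nothing is missing.
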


Since any graph $G$ is isomorphic to any union graph of all its
connected components, the following corollary yields immediately.

\begin{cor}{ Let $G$ be a graph and $G_{i}$ $(1\leq i\leq n)$
be all its connected components. Then, the following three
statements hold.

1) If ${\rm Fall}(G)\neq\emptyset$, then, for each $1\leq i\leq
n$, ${\rm Fall}(G_{i})\neq\emptyset$.

2) ${\rm Fall}(G)=\bigcap_{i=1}^{n}{\rm Fall}(G_{i})$.

3) If ${\rm Fall}(G)\neq\emptyset$, then,
$\chi_{f}(G)=\min\bigcap_{i=1}^{n}{\rm Fall}(G_{i})$ and
$\psi_{f}(G)=\max\bigcap_{i=1}^{n}{\rm Fall}(G_{i})$.

}
\end{cor}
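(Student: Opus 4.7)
The plan is to observe that the corollary is essentially a translation of the preceding theorem, once we recognize that the disjoint union construction $\biguplus_{i=1}^{n}G_{i}$ reproduces $G$ up to isomorphism when the $G_{i}$ are the connected components of $G$. So the main task is to justify this identification and then invoke the theorem verbatim.

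First I would fix the connected components $G_{1},\ldots,G_{n}$ of $G$ and define the map $\varphi:V(\biguplus_{i=1}^{n}G_{i})\rightarrow V(G)$ by $\varphi((i,x))=x$. This is clearly a bijection, and the edge sets match because an edge of $G$ lies entirely within some component $G_{i}$ (no edges run between distinct components) while every edge of the disjoint union comes from some $E(G_{i})$; hence $\varphi$ is a graph isomorphism. The only mildly delicate point is the ``no edges between components'' observation, but this is just the definition of connected components.

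Next I would note that ${\rm Fall}(\cdot)$, $\chi_{f}(\cdot)$, and $\psi_{f}(\cdot)$ are all isomorphism invariants, since a fall $k$-coloring is defined purely in terms of the vertex set, edge set, and closed neighborhoods, all of which are preserved by a graph isomorphism. Therefore ${\rm Fall}(G)={\rm Fall}(\biguplus_{i=1}^{n}G_{i})$, and likewise for the two extremal parameters when the set is nonempty.

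Finally, I would apply the three clauses of the preceding theorem directly to $\biguplus_{i=1}^{n}G_{i}$ and transport the conclusions back to $G$ via the isomorphism. Clause~(1) gives that if ${\rm Fall}(G)\neq\emptyset$ then each ${\rm Fall}(G_{i})\neq\emptyset$; clause~(2) gives ${\rm Fall}(G)=\bigcap_{i=1}^{n}{\rm Fall}(G_{i})$; and clause~(3) gives the formulas for $\chi_{f}(G)$ and $\psi_{f}(G)$ as the min and max of this intersection. There is no real obstacle here: the only step that requires any content at all is verifying the isomorphism $G\cong\biguplus_{i=1}^{n}G_{i}$, and the rest is bookkeeping.
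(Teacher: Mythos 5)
Your proposal is correct and matches the paper's own reasoning: the paper derives this corollary immediately from the preceding theorem by noting that $G$ is isomorphic to the union graph of its connected components. Your explicit isomorphism $\varphi((i,x))=x$ and the remark that ${\rm Fall}$, $\chi_{f}$, and $\psi_{f}$ are isomorphism invariants just spell out what the paper leaves implicit.
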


\section{Restriction of fall $t$-colorings of a graph into proper $t$-colorings of graphs in a specified set}

Now we prove that fall $k$-colorings of a graph can be reduced
into proper $k$-colorings of graphs in a specified set.

Let $G$ be a graph and $1\leq t\leq \delta(G)+1$ be a fixed
natural number. For each $v\in V(G)$, choose $t-1$ arbitrary
elements of $N_{G}(v)$ and join these $t-1$ vertices to each
other and name the new graph $H$. Let $\widehat{G_{t}}$ be the set
of all graphs $H$ constructed like this.

\begin{theorem}{ For each $1\leq t\leq \delta(G)+1$, $t\in {\rm Fall}(G)$  iff $t\in \{\chi(H)|\ H\in
\widehat{G_{t}}\}$. Specially, ${\rm Fall}(G)=\bigcup_{i=1}^{
\delta (G)+1}(\ \{\chi(H)|\ H\in \widehat{G_{i}}\}\bigcap \{i\}\
)$.

}\end{theorem}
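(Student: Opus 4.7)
The plan is a direct two-direction verification, built around the observation that the construction of $\widehat{G_t}$ plants, for each vertex $v$, a $K_t$ in $H$ consisting of $v$ and its $t-1$ chosen neighbors (each pair of which is either already adjacent in $G$ via $v$ or is made adjacent by the added edges). This clique is the bridge between proper $t$-colorings of $H$ and fall $t$-colorings of $G$.

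For the ($\Rightarrow$) direction, I start with a fall $t$-coloring $f$ of $G$. For each $v\in V(G)$, the hypothesis $f(N_G[v])=[t]$ lets me pick $t-1$ neighbors $u_1(v),\ldots,u_{t-1}(v)$ of $v$ whose colors under $f$ are exactly the $t-1$ values in $[t]\setminus\{f(v)\}$. I declare these to be the neighbors chosen at $v$, obtaining some $H\in\widehat{G_t}$. The added edges all connect vertices of pairwise distinct colors, so $f$ is still proper on $H$, giving $\chi(H)\le t$. On the other hand, $\{v,u_1(v),\ldots,u_{t-1}(v)\}$ is a $K_t$ in $H$, so $\omega(H)\ge t$ and hence $\chi(H)\ge t$. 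Thus $\chi(H)=t$, which places $t$ in $\{\chi(H)\mid H\in\widehat{G_t}\}$.

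For the ($\Leftarrow$) direction, assume some $H\in\widehat{G_t}$ with $\chi(H)=t$ and let $g$ be a proper $t$-coloring of $H$. Since $G$ is a spanning subgraph of $H$, $g$ is proper on $G$. Fix any $v\in V(G)$ and let $u_1(v),\ldots,u_{t-1}(v)$ be the $t-1$ neighbors of $v$ chosen during the construction of $H$; by definition of $\widehat{G_t}$ they form a clique in $H$, and each is adjacent to $v$ in $G\subseteq H$. Hence $\{v,u_1(v),\ldots,u_{t-1}(v)\}$ is a $K_t$ in $H$, so $g$ assigns it all $t$ colors. Since every one of these vertices lies in $N_G[v]$, we get $g(N_G[v])=[t]$, so $g$ is a fall $t$-coloring of $G$ and $t\in\mathrm{Fall}(G)$.

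The final sentence is a bookkeeping consequence: if $t\in\mathrm{Fall}(G)$ then for any $v$, the $t$ colors must fit in $N_G[v]$, forcing $t\le d_G(v)+1$, so $t\le\delta(G)+1$. Restricting the index to $i\le\delta(G)+1$ and intersecting with $\{i\}$ to enforce $\chi(H)=i$ (not merely $\chi(H)\le i$) exactly recaptures $\mathrm{Fall}(G)$ as the stated union. I expect no substantial obstacle; the only step that requires a small check is verifying that adding edges among the $t-1$ chosen neighbors of $v$ in the ($\Rightarrow$) direction does not destroy properness of $f$, which is immediate from the fact that those neighbors were selected precisely to have pairwise distinct colors.
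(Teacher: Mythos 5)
Your proposal is correct and follows essentially the same route as the paper's proof: one direction selects, for each vertex, $t-1$ neighbors realizing the missing colors and uses the planted $K_t$ to pin $\chi(H)=t$; the other reads a proper $t$-coloring of $H$ back as a fall $t$-coloring of $G$ via the same cliques. You merely spell out details the paper labels ``obvious,'' so no further comparison is needed.
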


\begin{proof}{

Let $1\leq t\leq \delta(G)+1$. If $t\in \{\chi(H)|\ H\in
\widehat{G_{t}}\}$, then, there exists a graph $H$ in
$\widehat{G_{t}}$ such that $\chi(H)=t$ and there exists a
$t$-coloring $f$ of $H$. This coloring $f$ of $H$, is obviously a
fall $t$-coloring of $G$ and therefore, $t\in {\rm Fall}(G)$.
Conversely, if $t\in {\rm Fall}(G)$, then, there exists a fall
$t$-coloring $g$ of $G$. For each $v\in V(G)$, there exist $t-1$
elements of $N_{G}(v)$ such that the set of their colors and the
color of $v$ is equal to $[t]$, join all of them to each other to
construct a new graph $T$ in $\widehat{G_{t}}$. The fall
$t$-coloring $g$ of $G$ is obviously a $t$-coloring of $T$, also
$\omega(T)\geq t$, thus, $\chi(T)=t$ and $t\in \{\chi(H)|\ H\in
\widehat{G_{t}}\}$. The second part of the theorem follows
immediately.

}\end{proof}

Restricting this theorem into $r$-regular graphs and $t=r+1$,
yields a beautiful proposition of \cite{mah} but in different
terminologies.

\begin{pro}{
For each $r$-regular graph $G$, $r+1\in {\rm Fall}(G)$ iff
$\chi(G^{(2)})=r+1$, where $G^{(2)}=(\ V(G)\ ,\ \{\ \{x,y\}\ |\
x,y\in V(G),\ x\neq y,\ d_{G}(x,y)\leq2\ \}\ )$.}
\end{pro}

\section{Fall Colorings of Mycielskian of graphs}

Let $G:=(\ \{x_{1},\ldots,x_{n}\}\ ,\ E(G)\ )$ be a graph. The
Mycielskian of $G$ ( denoted by $M(G)$ ) is a graph with $2n+1$
vertices $x_{1},\ldots,x_{n},y_{1},\ldots,y_{n},z$ with edge set
$E(G)\ \bigcup\  \{\ \{y_{i},x_{j}\}\ |\ i,j\in [n],\
\{x_{i},x_{j}\}\in E(G)\ \}\ \bigcup\  \{\ \{z,y_{i}\}\ |\ i\in
[n]\ \}$.

For example, $M(K_{2})$ is $C_{5}$. We know that ${\rm
Fall}(M(K_{2}))={\rm Fall}(C_{5})=\emptyset$. Now we prove that
for each graph $G$, ${\rm Fall}(M(G))=\emptyset$.

\begin{theorem}{ For each graph $G$, ${\rm Fall}(M(G))=\emptyset$.}
\end{theorem}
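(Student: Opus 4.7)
My plan is to argue by contradiction. Suppose $f$ is a fall $k$-coloring of $M(G)$, and let $c := f(z)$. Since every $y_i$ is adjacent to $z$, we have $f(y_i) \neq c$ for all $i \in [n]$. I may assume $n \geq 1$ and $\delta(G) \geq 1$, since otherwise either $M(G)$ is a single vertex (the degenerate case $n=0$) or some $x_{i_0}$ is isolated in $M(G)$ while the edge $\{z,y_1\}$ is present, which already rules out a fall coloring by the general obstruction mentioned in the introduction.

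The crux of the argument is the following structural claim: for every $i \in [n]$, $f(x_i) \in \{c, f(y_i)\}$. Indeed, since $y_i$ must be colorful, the color $f(x_i)$ has to appear in $f(N[y_i]) = \{f(y_i), c\} \cup \{f(x_j) : j \in N_G(i)\}$. But $x_i \notin N[y_i]$ (as $G$ has no loop, $y_i$ is not adjacent to $x_i$), and for each $j \in N_G(i)$ we have $f(x_j) \neq f(x_i)$ by properness. So $f(x_i)$ must equal either $f(y_i)$ or $c$.

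Set $A := \{i \in [n] : f(x_i) = c\}$. If $A = \emptyset$, then by the claim $f(x_i) = f(y_i) \neq c$ for every $i$; choosing any $i$, the set $N[x_i] = \{x_i\} \cup \{x_j, y_j : j \in N_G(i)\}$ receives only colors in $[k]\setminus\{c\}$ (all $f(x_j), f(y_j)$ avoid $c$), violating colorfulness of $x_i$. Hence $A \neq \emptyset$.

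Now fix any $i \in A$. Since the color class $f^{-1}(c)$ is independent in $M(G)$, $A$ is independent in $G$, so every $j \in N_G(i)$ lies in $[n] \setminus A$; by the claim, $f(x_j) = f(y_j) =: d_j$ with $d_j \neq c$. Colorfulness of $x_i$ then forces $\{c\} \cup \{d_j : j \in N_G(i)\} = [k]$, so $\{d_j : j \in N_G(i)\} = [k] \setminus \{c\}$. But $f(y_i) \in [k] \setminus \{c\}$, hence $f(y_i) = d_{j^*}$ for some $j^* \in N_G(i)$, contradicting properness since $y_i$ is adjacent to $x_{j^*}$ in $M(G)$. The only nontrivial step of the plan is the structural claim $f(x_i) \in \{c, f(y_i)\}$; everything else is routine bookkeeping.
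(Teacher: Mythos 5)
Your proof is correct and follows essentially the same route as the paper: the structural claim $f(x_i)\in\{f(z),f(y_i)\}$, obtained from colorfulness of $y_i$ together with $N(y_i)\setminus\{z\}\subseteq N(x_i)$, is exactly the paper's key step, and your endgame (the color of $y_i$ must be repeated on some neighbor $x_{j^{*}}$, violating properness) is the paper's contradiction as well. The only difference is bookkeeping: you split on whether the color class of $f(z)$ meets $\{x_1,\ldots,x_n\}$, whereas the paper directly produces such a vertex $x_{i_1}$ from an edge of $G$.
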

\begin{proof}{ If $E(G)=\emptyset$, then, $M(G)$ has at least one
isolated vertex and also at least one edge. Therefore, ${\rm
Fall}(M(G))=\emptyset$. Now we prove the theorem for the case
$E(G)\neq\emptyset$. If $E(G)\neq\emptyset$ and ${\rm
Fall}(M(G))\neq\emptyset$, then, there exists a fall $k$-coloring
$f$ of $M(G)$ for some $k\in \mathbb{N}$. Since
$E(G)\neq\emptyset$, there exists an integer $i_{0}\in [n]$ such
that $f(x_{i_{0}})\neq f(z)$ and since for each $j\in [n]$,
$f(y_{j})\neq f(z)$ and $f$ is a fall $k$-coloring, there exists
$i_{1}\in [n]$ such that $x_{i_{1}}\in N_{G}(x_{i_{0}})$ and
$f(x_{i_{1}})=f(z)$. Since for each $i\in [n]$ with $f(x_{i})\neq
f(z)$, $N(y_{i})\setminus\{z\}\subseteq N(x_{i})$, so
$f(x_{i})\in \{f(y_{i}),f(z)\}$, on the other hand, $f(x_{i})\neq
f(z)$, hence, $f(x_{i})=f(y_{i})$. This immediately shows that
each color of $[k]$ appears on the neighborhood of $y_{i_{1}}$,
which is a contradiction. Hence, ${\rm Fall}(M(G))=\emptyset$.

}
\end{proof}

\section{Fall colorings of complement of bipartite graphs}

Complements of bipartite graphs are very interesting graphs,
because in each proper $k$-coloring, the cardinality of each color
class is at most 2. The following theorem characterizes all fall
colorings of this type of graphs.

\begin{theorem}{
Let $G$ be a bipartite graph. Then, ${\rm Fall}(G^{c})\subseteq \
\{\ \chi (G^{c})\ \}$. Besides, it is polynomial to decide
whether or not ${\rm Fall}(G^{c})=\{\ \chi (G^{c})\ \}$ .

}\end{theorem}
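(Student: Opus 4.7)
The plan is to exploit the fact that $G$ is triangle-free, so any clique in $G$ has at most two vertices. Since independent sets in $G^c$ are cliques in $G$, every color class of a proper coloring of $G^c$ is either a singleton or an edge of $G$. So a proper $k$-coloring of $G^c$ is nothing but a partition of $V(G)$ into $k$ sets, each of which is a singleton or an edge of $G$.

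Next I would determine which color classes are admissible in a \emph{fall} coloring. The basic observation is that $N_{G^c}[v]=V(G)\setminus N_G(v)$, so the fall condition says that no color class $C$ is contained in $N_G(v)$ for any $v$. Applying this to the two possible shapes of $C$, I would prove two claims. First, a singleton class $\{u\}$ is admissible if and only if $u$ is isolated in $G$: if some $v$ is adjacent to $u$, then $u\notin N_{G^c}[v]$ and $v$ never sees that color. Second, every edge $\{u,w\}\in E(G)$ is automatically admissible: a vertex $v$ adjacent to both $u$ and $w$ would produce an odd closed walk of length three, contradicting bipartiteness. (Only here is bipartiteness actually used.)

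From this I conclude that a fall coloring of $G^c$ exists exactly when, letting $I$ denote the set of isolated vertices of $G$, the induced subgraph $G-I$ admits a perfect matching $M$; the unique resulting fall coloring uses $|I|+|M|=|I|+\tfrac{n-|I|}{2}$ colors. On the other hand, since every proper coloring of $G^c$ corresponds to a partition into singletons and edges of $G$, we have $\chi(G^c)=n-\mu(G)$, and the existence of the perfect matching of $G-I$ forces $\mu(G)=\tfrac{n-|I|}{2}$, giving the same number $|I|+\tfrac{n-|I|}{2}$. So ${\rm Fall}(G^c)\subseteq\{\chi(G^c)\}$, with equality iff $G-I$ has a perfect matching.

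The algorithmic part is then immediate: compute the isolated-vertex set $I$, delete it, and run any polynomial-time maximum matching algorithm on $G-I$ to check for a perfect matching. I expect no serious obstacle; the only subtle point is the singleton analysis, where one must remember to treat the case $v=u$ separately (the closed neighborhood always contains $u$ itself, so the obstruction only comes from non-isolated $u$).
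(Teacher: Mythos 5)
Your proposal is correct and takes essentially the same route as the paper: both arguments observe that every color class of $G^{c}$ is a singleton (admissible only for isolated vertices of $G$) or an edge of $G$, identify fall colorings of $G^{c}$ with perfect matchings of $G$ minus its isolated vertices, and reduce the decision problem to a polynomial-time matching test. The only cosmetic difference is that you compute $\chi(G^{c})$ as $n-\mu(G)$ via the matching number, where the paper writes the same quantity directly as $|V(G)|-\tfrac{1}{2}|\{x:\deg_G(x)>0\}|$.
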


\begin{proof}{
If ${\rm Fall}(G^{c})\neq\emptyset$, then, $\exists k\in {\rm
Fall}(G^{c})$. Suppose that $f$ is a fall $k$-coloring of $G^{c}$.
Obviously, each color class of $f$ is either of the form $\{x\}$
or of the form $\{y,z\}$ such that $y\in A$ and $z\in B$. A color
class is of the form $\{x\}$ iff $x$ is an isolated vertex of the
graph $G$. Therefore, the set of color classes of $f$ is the
union of $\{\ \{x\}\ |\ x\in V(G),\ deg_{G}(x)=0\ \}$ and the set
of edges of a perfect matching of the induced subgraph of $G$ on
$\{\ x\ |\ x\in V(G),\ deg_{G}(x)>0\ \}$, also,
$k=|V(G)|-\frac{1}{2}|\{\ x\ |\ x\in V(G),\ deg_{G}(x)>0\ \}|$.
Therefore, ${\rm Fall}(G^{c})\subseteq \{\ |V(G)|-\frac{1}{2}|\{\
x\ |\ x\in V(G),\ deg_{G}(x)>0\ \}|\ \}$. Besides, if ${\rm
Fall}(G^{c})\neq\emptyset$, then, the induced subgraph of $G$ on
$\{\ x\ |\ x\in V(G),\ deg_{G}(x)>0\ \}$ has a perfect matching,
in this case, obviously, $\chi (G^{c})=|V(G)|-\frac{1}{2}|\{\ x\
|\ x\in V(G),\ deg_{G}(x)>0\ \}|$, and consequently, ${\rm
Fall}(G^{c})=\{\  \chi (G^{c})\ \}$. Therefore, for each bipartite
graph $G$,  ${\rm Fall}(G^{c})\subseteq \{\ \chi (G^{c})\ \}$. We
know that if ${\rm Fall}(G^{c})\neq\emptyset$, then, the induced
subgraph of $G$ on $\{\ x\ |\ x\in V(G),\ deg_{G}(x)>0\ \}$ has a
perfect matching. Conversely, if the induced subgraph of $G$ on
$\{\ x\ |\ x\in V(G),\ deg_{G}(x)>0\ \}$ has a perfect matching,
then, the union of $\{\ \{x\}\ |\ x\in V(G),\ deg_{G}(x)=0\ \}$
and the edge set of each perfect matching of the induced subgraph
of $G$ on $\{\ x\ |\ x\in V(G),\ deg_{G}(x)>0\ \}$ is the set of
color classes of a fall $(\ |V(G)|-\frac{1}{2}|\{\ x\ |\ x\in
V(G),\ deg_{G}(x)>0\ \}|\ )$-coloring of $G^{c}$ and therefore,
${\rm Fall}(G^{c})\neq\emptyset$. Accordingly, ${\rm
Fall}(G^{c})=\{\  \chi (G^{c})\ \}$ iff ${\rm
Fall}(G^{c})\neq\emptyset$ iff the induced subgraph of $G$ on
$\{\ x\ |\ x\in V(G),\ deg_{G}(x)>0\ \}$ has a perfect matching.
Since the problem of deciding whether or not the induced subgraph
of $G$ on $\{\ x\ |\ x\in V(G),\ deg_{G}(x)>0\ \}$ has a perfect
matching, is a polynomial time  problem, thus, it is polynomial
time to decide whether or not ${\rm Fall}(G^{c})=\{\ \chi (G^{c})\
\}$. }\end{proof}

\textbf{Acknowledgements}
\\
\\
The author wishes to thank Ali Jamalian who introduced him the
subject of fall coloring and the reference \cite{dun}. Also, he
would like to thank Hossein Hajiabolhassan, Meysam Alishahi and
Ali Taherkhani for their useful comments.

\end{document}